\author{\textbf{Ian Benway}}
\title{On Isolated Geometric Triangulations}
\date{}
\newtheorem{theorem}{Theorem}[section]
\newtheorem{lemma}[theorem]{Lemma}
\newtheorem{proposition}[theorem]{Proposition}
\newtheorem{remark}[theorem]{Remark}
\newtheorem{corollary}[theorem]{Corollary}
\newcommand{\HH}{\mathbb H}
\newcommand{\ZZ}{\mathbb Z}
\newcommand{\SL}{\text{SL}}
\newcommand{\CC}{\mathbb C}
\renewcommand{\HH}{\mathbb H}
\begin{document}
\maketitle

\begin{abstract}
    Work of Kalelkar, Schleimer, and Segerman shows that, with some exceptions, the set of essential ideal triangulations of an orientable cusped hyperbolic 3-manifold is connected via 2-3 and 3-2 moves. It is natural to ask if the subgraph consisting of only those triangulations that are geometric is connected. Hoffman gives the first two examples of geometric triangulations with the property that no 2-3 or 3-2 move results in a geometric triangulation. In this paper, we introduce these as isolated geometric triangulations and show that this is not a property of small manifolds by exhibiting an infinite family of once-punctured torus bundles whose monodromy ideal triangulation is isolated.
\end{abstract}
\section{Introduction}
Let $M$ be an orientable cusped hyperbolic 3-manifold. We consider the set $\mathcal T(M)$ of all ideal triangulations of $M$. Given a triangulation $T$ with more than one tetrahedron, we can obtain a new triangulation by applying a 2-3 move; see Figure \ref{fig:2-3 move}. We can then consider the \textbf{bistellar flip graph}, denoted $\mathbb T(M)$, whose node set is $\mathcal T(M)$ and for which an arc exists between two triangulations if and only if there is a 2-3 move bringing one to the other. A theorem of Amendola shows that $\mathbb T(M)$ is connected; see \cite{Amendola_2005}, \cite{matveev2007algorithmic}, \cite{article}, and \cite{rubinstein2019traversingthreemanifoldtriangulationsspines}.

We can ask for our triangulations to satisfy nicer properties, and ask if the corresponding induced subgraphs of $\mathbb T(M)$ are likewise connected. For example, Segerman shows that the subgraph of $\mathbb T(M)$ induced by triangulations having no degree-1 edge is connected \cite{segerman2016connectivitytriangulationsdegreeedges}. Call a triangulation \textbf{essential} if it admits solutions to the gluing equations in $\CC\setminus\{0,1\}$. Define $\mathbb T^\circ_E(M)$ to be the subgraph of $\mathbb T(M)$ induced by essential triangulations which admit some 2-3 or 3-2 move that preserves essentiality\footnote{There are essential triangulations which are \textit{isolated}: the triangulation is essential but no 2-3 or 3-2 move produces an essential triangulation. We expand on this in Proposition \ref{prop: isolated not geom} and justify ignoring isolated essential triangulations for the purposes of this paper.}. Kalelkar, Schleimer, and Segerman show that $\mathbb{T}^\circ_E(M)$ is connected \cite{kalelkar2024connectingessentialtriangulationsii}. 
Work of Thurston shows such solutions which lie in $\mathbb H=\{z\in\mathbb C\:|\:\text{Im}(z) >0\}$ correspond to positively-oriented tetrahedra which admit a hyperbolic structure of strictly positive volume. If all of the tetrahedra of a triangulation $T$ admit a solution to the gluing equations in $\mathbb H$, the tetrahedra glue together to give a smooth manifold with a complete hyperbolic structure, and we call $T$ a \textbf{geometric triangulation} \cite{thurston1979geometry}.

While it has long been known that every cusped hyperbolic 3-manifold has a topological ideal triangulation \cite{bing3manifoldtriangulations}, it is still an open question as to whether every finite-volume cusped hyperbolic 3-manifold admits a geometric triangulation. It then becomes of interest to understand the set of geometric triangulations of a fixed manifold. For example, Dadd and Duan show that the figure-8 knot complement has infinitely many geometric triangulations \cite{dadd2015constructinginfinitelygeometrictriangulations}. Futer, Hamilton, and Hoffman show that this property holds virtually -- every cusped hyperbolic 3-manifold has a finite cover which admits infinitely many geometric triangulations \cite{Futer_2022}. In this paper, we investigate the set of geometric triangulations by means of the induced subgraph $\mathbb T_G(M)$ of $\mathbb T_E^\circ(M)$ consisting of geometric triangulations. We call this the \textbf{geometric bistellar flip graph}. Given the context above, it is natural to ask whether $\mathbb T_G(M)$ is connected.

It seems the only mention of this in the literature is two counterexamples by Neil Hoffman in the figure-8 knot complement; see Remark 3.3 in \cite{dadd2015constructinginfinitelygeometrictriangulations}. These two geometric triangulations, obtained by the isomorphism signatures\footnote{The software package Regina can turn these isomorphism signatures into triangulations \cite{regina}, \cite[Section~3.2]{Burton_2011}.} \textit{fLQcacdedejbqqww} and \textit{fLLQccecddehqrwwn}, share the property that any possible 2-3 or 3-2 move results in a triangulation which is no longer geometric. We call a triangulation with this property an \textbf{isolated geometric triangulation}.

Beyond these two small examples, not much is known about isolated geometric triangulations. In this paper, we expand the set of such examples in the following theorem:\\\\
\textbf{Theorem \ref{theorem: main}.} \textit{The once-punctured torus bundle associated to the cyclic word $R^{2N}L^{2M}$ has an isolated monodromy ideal triangulation for all $N,M > 0$. }\\\\
In particular, this shows that having an isolated geometric triangulation is not a property of small manifolds. We further show that for each of these triangulations of once-punctured torus bundles, there is a sequence of 2-3 and 3-2 moves which passes through triangulations containing flat tetrahedra and results in a new geometric triangulation. From this, we conclude the following:\\\\
\textbf{Corollary \ref{cor:infinitely many disconnected}} \textit{There are infinitely many cusped hyperbolic 3-manifolds $M$ for which the geometric bistellar flip graph is disconnected.}

\begin{figure}
     \centering
     \captionsetup{width=.8\linewidth}
     \begin{subfigure}[b]{0.4\textwidth}
         \centering
         \includegraphics[width=\textwidth]{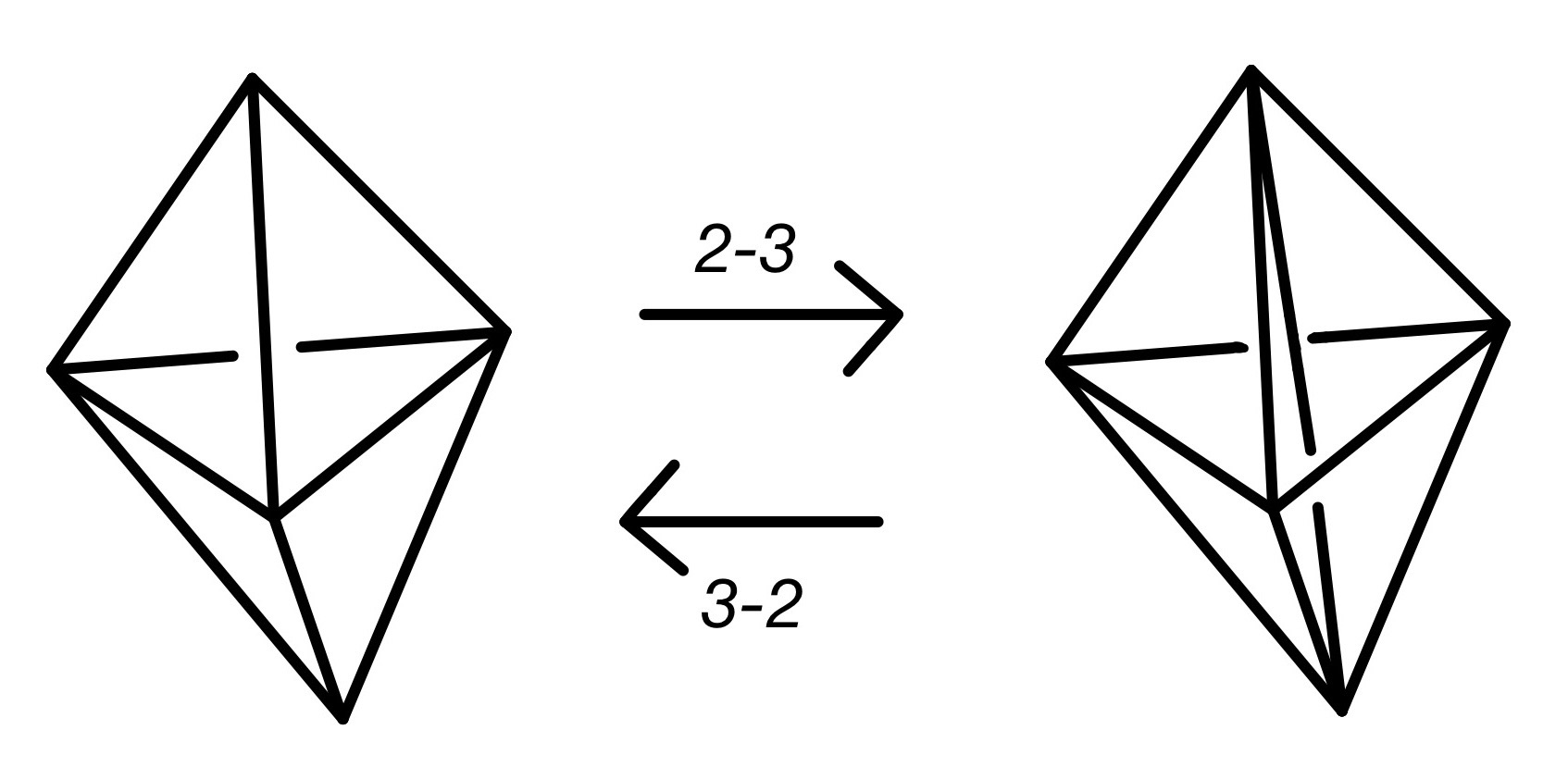}
         \caption{}
         \label{fig:2-3 move}
     \end{subfigure}
     \hspace{40px}
     \begin{subfigure}[b]{0.43\textwidth}
         \centering
         \includegraphics[width=\textwidth]{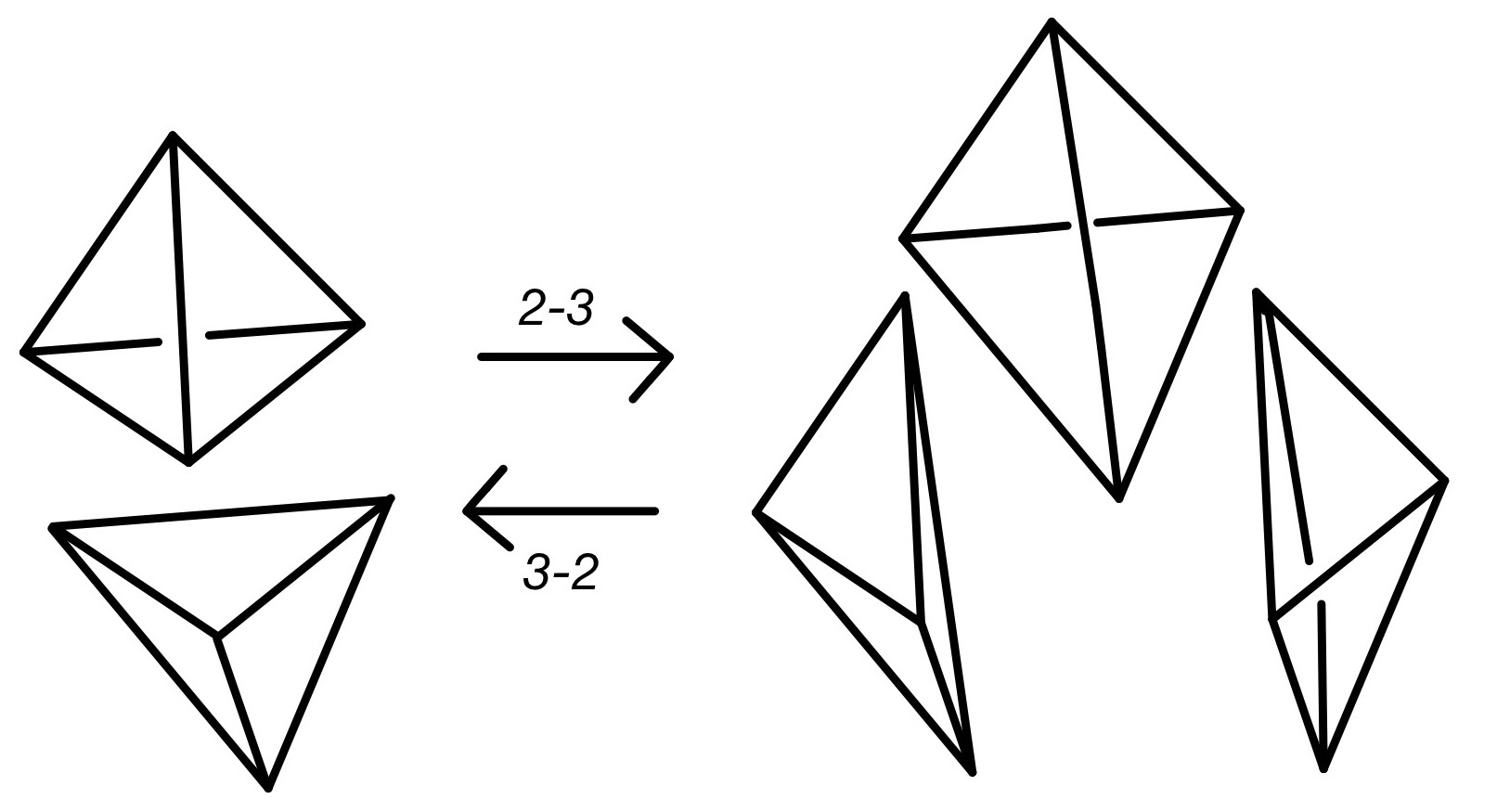}
         \caption{}
         \label{fig:2-3 move unglued}
     \end{subfigure}
     \hfill
        \caption{Two views of the 2-3 move}
        \label{fig:2-3 moves two views}
\end{figure}

\subsection{Acknowledgments}
The author would like to thank Henry Segerman and Alex He for their incredible support, comments, and advice. Thanks to Isaiah DeHoyos for divulging some useful trigonometry tricks. The author was supported in part by
National Science Foundation grant DMS-2203993.
\section{Background}
\subsection{Geometric Triangulations and Local Moves}
For background on geometric triangulations, we refer the reader to Chapter 4 of \cite{purcell2020hyperbolicknottheory}. Let $T$ be an ideal triangulation of an orientable cusped hyperbolic 3-manifold $M$. A \textbf{2-3 move}\footnote{This is sometimes called a Pachner move or a bistellar flip.} is a local modification of the triangulation which replaces a bipyramid consisting of two distinct tetrahedra which share a common face with a union of three distinct tetrahedra which share a common edge, illustrated in Figure \ref{fig:2-3 moves two views}. This replaces one 3-ball with another, so a 2-3 move changes the combinatorics of $T$ but not its topology. The inverse of a 2-3 move is called a 3-2 move, and can be applied at a degree-3 edge whenever the three tetrahedra around that edge are all distinct. As mentioned in the introduction, 2-3 and 3-2 moves are sufficient to get from one triangulation of $M$ to any other\footnote{Here we use the assumption that $M$ is orientable: there are no ideal triangulations of orientable cusped hyperbolic 3-manifolds with only one tetrahedron. This is not true, however, for the the non-orientable Gieseking manifold.}; see \cite[Theorem 1.2.5]{matveev2007algorithmic} and \cite{Amendola_2005}. Moreover, 2-3 and 3-2 moves are sufficient to get between any two essential triangulations (triangulations which admit solutions in $\mathbb C\setminus\{0,1\}$ to the gluing equations) while only passing through essential triangulations if one ignores those triangulations which do not admit any 2-3 or 3-2 moves that preserve essentiality \cite{kalelkar2024connectingessentialtriangulationsii}. Such triangulations are called \textbf{isolated essential triangulations}, and the following proposition justifies ignoring them for the purposes of this paper.
\begin{proposition}
\label{prop: isolated not geom}
    If $T$ is an isolated essential triangulation, then $T$ is not geometric.
\end{proposition}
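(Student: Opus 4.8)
The plan is to prove the contrapositive: if $T$ is geometric, then $T$ admits a 2-3 or 3-2 move preserving essentiality, hence $T$ is not isolated essential. The key observation is that a geometric triangulation is a very special kind of essential triangulation—all of its shape parameters lie in the upper half-plane $\HH$—and that geometricity is an open condition which should make it easy to realize at least one essential move. First I would recall that since $T$ is geometric it has at least two tetrahedra (a single tetrahedron cannot give a complete hyperbolic structure on an orientable cusped manifold, as noted in the footnote), so there is at least one internal face shared by two distinct tetrahedra, and a 2-3 move is combinatorially available there.

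The heart of the argument is to show that performing this 2-3 move on a geometric triangulation yields an essential triangulation. The natural tool here is the behavior of shape parameters under a 2-3 move. Recall that the 2-3 move replaces two tetrahedra with shapes $z,w \in \HH$ by three tetrahedra, and the new shape parameters are determined by $z$ and $w$ via the standard ``shape-splitting'' formulas coming from the gluing/edge equations. The crucial point is that when the original two tetrahedra are positively oriented (shapes in $\HH$), the three resulting tetrahedra automatically satisfy all the gluing equations with shapes in $\CC\setminus\{0,1\}$; geometrically, one is just subdividing a union of two positively-oriented ideal tetrahedra (a bipyramid) into three ideal tetrahedra, and the total structure is still a genuine (possibly degenerate at worst on a measure-zero locus) solution. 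I would argue that the new shapes cannot equal $0$ or $1$: this would force one of the three new tetrahedra to be flat or degenerate, which a direct computation from the splitting formulas rules out whenever $z,w \in \HH$.

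The main obstacle I anticipate is this last verification—that the three new shape parameters genuinely lie in $\CC \setminus \{0,1\}$, i.e., that none of the resulting tetrahedra degenerates. There is a subtlety: the 2-3 move on a geometric triangulation need not produce a geometric triangulation (indeed, the whole paper is about situations where it does not), so the three new tetrahedra may fail to lie in $\HH$; they could be flat or negatively oriented. Thus I cannot simply invoke ``geometric implies geometric.'' Instead I must confirm the weaker conclusion that the shapes avoid $\{0,1\}$. The right way to handle this is to write the new shapes explicitly in terms of $z,w$ and the dihedral-angle decomposition of the bipyramid, and check that degeneration of a new tetrahedron would force a linear dependence among the vertices of the bipyramid that is incompatible with $z,w \in \HH$. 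One clean approach is to note that the bipyramid, as the union of two positively-oriented ideal tetrahedra, is a nondegenerate convex ideal polyhedron, so its central edge has a well-defined positive total dihedral angle and each of the three subdividing tetrahedra inherits a nonzero, non-$\pi$ angle sum, ruling out shapes in $\{0,1\}$.

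If, in some edge case, the available 2-3 move happens to produce a shape equal to $0$ or $1$, the fallback is to use the geometric structure more robustly: a geometric triangulation has a canonical (Epstein--Penner) decomposition relationship and at least one face whose two adjacent tetrahedra form a strictly convex bipyramid, and a strictly convex bipyramid subdivides into three strictly positively-oriented tetrahedra—hence a geometric, a fortiori essential, triangulation—showing $T$ is not isolated. Thus the overall structure is: geometricity gives an available 2-3 move; the positivity of the shapes forces the resulting triangulation to be essential; therefore $T$ is not isolated essential, which is the contrapositive of the proposition.
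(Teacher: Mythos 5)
Your overall skeleton matches the paper's proof: argue the contrapositive, note that a $2$-$3$ move is combinatorially available, track the shape parameters through the move, and show the new shapes avoid $\{0,1,\infty\}$. You also correctly flag the one real subtlety, namely that the new tetrahedra need not be positively oriented, so one cannot conclude ``geometric implies geometric.'' The problem is the justification you actually commit to for the key verification. Your ``clean approach'' asserts that the bipyramid, being a union of two positively-oriented ideal tetrahedra, is a \emph{convex} ideal polyhedron with a well-defined central edge. This is false in general: the union is convex if and only if the dihedral angle of the union along each edge of the common face (a sum of two tetrahedral angles, e.g.\ $\arg z + \arg w$) is less than $\pi$, and non-convexity is exactly what makes a $2$-$3$ move non-geometric. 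Indeed, the paper's main theorem produces geometric triangulations in which \emph{every} available bipyramid is non-convex in this sense. Your fallback is worse: the claim that a geometric triangulation always has some face whose two adjacent tetrahedra form a strictly convex bipyramid would imply that every geometric triangulation admits a geometric $2$-$3$ move, flatly contradicting the existence of isolated geometric triangulations (Hoffman's two examples and Theorem \ref{theorem: main} itself).

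The fix is the elementary computation you gesture at but never perform, and it is the entirety of the paper's proof. With $z' = \frac{1}{1-z}$ and $z'' = \frac{z-1}{z}$ (likewise for $w$), the three new shapes are $r = z'w'$, $u = wz''$, $v = zw''$. If $T$ is geometric then $z, z', z'', w, w', w''$ all lie in $\HH$, so each of $r, u, v$ is a product of two numbers with argument in $(0,\pi)$; such a product has argument in $(0,2\pi)$ and hence is never a nonnegative real number, in particular never $0$ or $1$ (and is finite and nonzero). No convexity is needed anywhere. Equivalently, one can argue geometrically without convexity: normalizing the common face to $0,1,\infty$, the two apexes lie in opposite half-planes of $\CC$, so the five ideal vertices of the bipyramid are pairwise distinct, and each new tetrahedron, having four distinct ideal vertices, has cross-ratio shape in $\CC\setminus\{0,1\}$ --- possibly real (flat) or negatively oriented, but still essential, which is all the proposition requires.
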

\begin{proof}
    If $T$ is geometric, then each tetrahedron has a shape parameter which is a solution to the gluing equations with positive imaginary part. For any 2-3 or 3-2 move, we can track the shape parameters to the new tetrahedra. We will show the case where we do a 2-3 move (the case where we do a 3-2 move is nearly identical). Let $z$ and $w$ be the shape parameters of the bipyramid before the 2-3 move and $r$, $u$, and $v$ be the shape parameters of the three tetrahedra after the 2-3 move as in Figure $\ref{fig:2-3 with shapes}$. Let $z' = \frac{1}{1-z}$ and $z''=\frac{z-1}{z}$; likewise for $w$. We have $r = z'w'$, $u=wz''$, and $v=zw''$. Since $T$ is geometric, $z$, $z'$, $z''$, $w$, $w'$, and $w''$ all have positive imaginary part.  Thus $r$, $u$, and $v$ cannot be $1$, $0$, or $\infty$. So the result of the 2-3 move is essential.
    \begin{figure}
    \centering
    \captionsetup{width=.8\linewidth}
\includegraphics[width=0.7\linewidth]{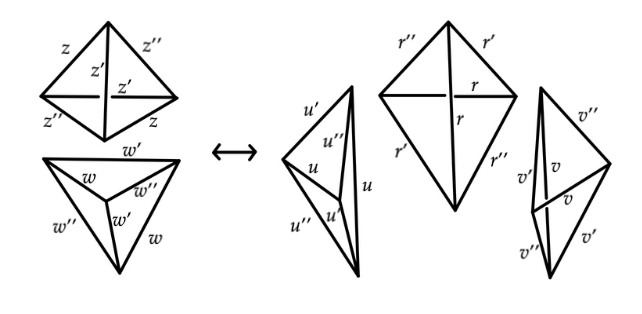}
    \caption{A 2-3 move labeled with shape parameters.}
    \label{fig:2-3 with shapes}

\end{figure}
\end{proof}

In other words, $\mathbb T_G(M)$ is contained in the main connected component of the essential bistellar flip graph $\mathbb{T}^\circ_E(M)$. We now investigate the connectedness of $\mathbb T_G(M)$ itself.

\begin{figure}
    \centering
    \captionsetup{width=.8\linewidth}
    \begin{subfigure}[b]{0.45\textwidth}
    \centering
        \includegraphics[width=0.8\linewidth]{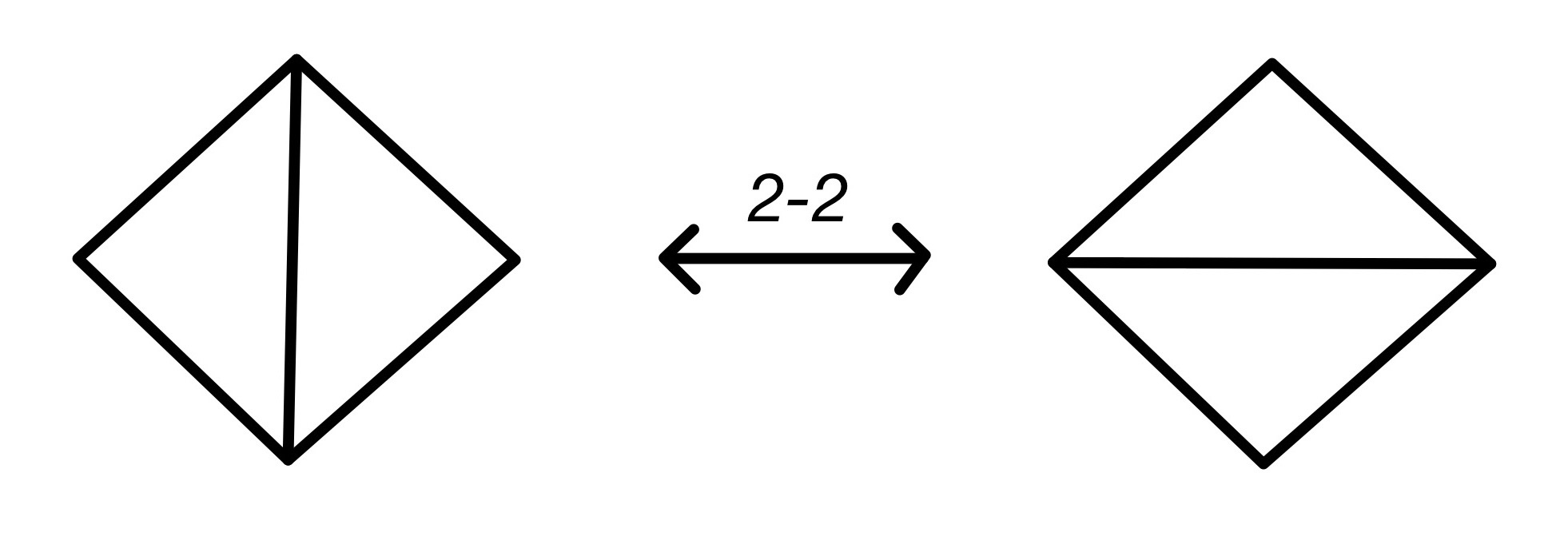}
        \caption{}
        \label{fig:2-2 move}
    \end{subfigure}
    \hspace{20px}
    \begin{subfigure}[b]{0.45\textwidth}
    \centering
        \includegraphics[width=0.8\linewidth]{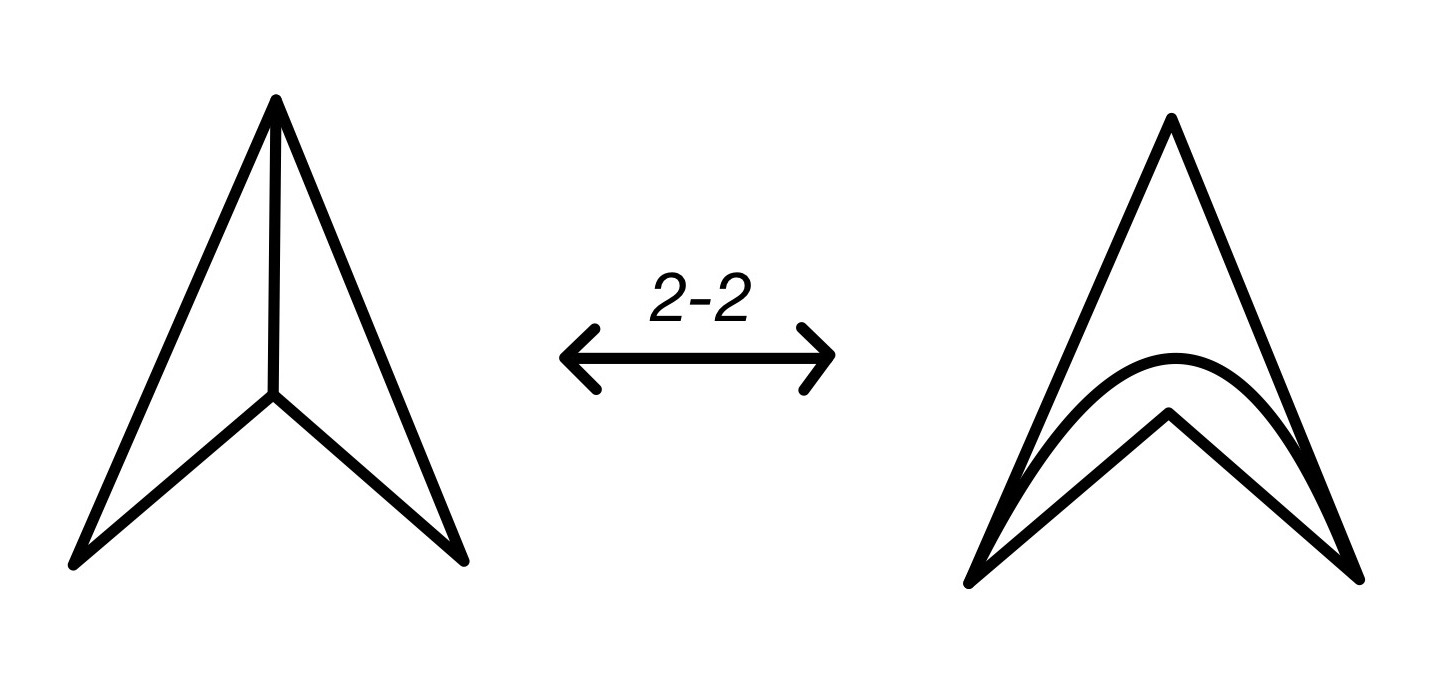}
        \caption{}
        \label{fig:nongeometric 2-2 move}
    \end{subfigure}
    \caption{\textit{(a)}: A 2-2 move. \textit{(b):} A 2-2 move, the result of which cannot be geometric. In other words, either the introduced edge is not a geodesic (as pictured) or a triangle becomes negatively-oriented.}
\end{figure}

Suppose two triangulations $T$ and $T'$ differ by a 2-3 move. If both $T$ and $T'$ are geometric triangulations, we will say they differ by a \textbf{geometric 2-3 move}. In this case, the edge introduced by the 2-3 move can be taken to be a geodesic, and the tetrahedra of $T'$ are all positively-oriented. 

It is useful to understand the obstructions which may cause a 2-3 move to be not geometric. As a theme throughout this paper, we will take advantage of the Euclidean structure of the cusp to understand obstructions in the hyperbolic structure of the 3-manifold. Recall that a geometric triangulation of $M$ induces a Euclidean triangulation of the torus boundary, which we call the cusp triangulation. Figure \ref{fig:2-3 ideal cusp} illustrates the effect of a 2-3 move on the cusp triangulation. Note that a 2-3 move induces three geometric 2-2 moves -- that is, 2-2 moves where each edge can be taken to be a Euclidean geodesic -- between vertices corresponding to two tetrahedra. For each cusp of $M$, the cusp triangulation induced by $T$ is unique up to similarity, thus an obstruction to these geometric 2-2 moves induces an obstruction to the corresponding 2-3 move in $T$; see Figure \ref{fig:nongeometric 2-2 move}. In other words, to show that a 2-3 move is not geometric, as we will in the proof of Theorem \ref{theorem: main}, we can show that the corresponding 2-2 moves on the cusp are not geometric.

\begin{figure}[H]
     \centering
     \captionsetup{width=.8\linewidth}
     \begin{subfigure}[b]{0.4\textwidth}
         \centering
         \includegraphics[width=\textwidth]{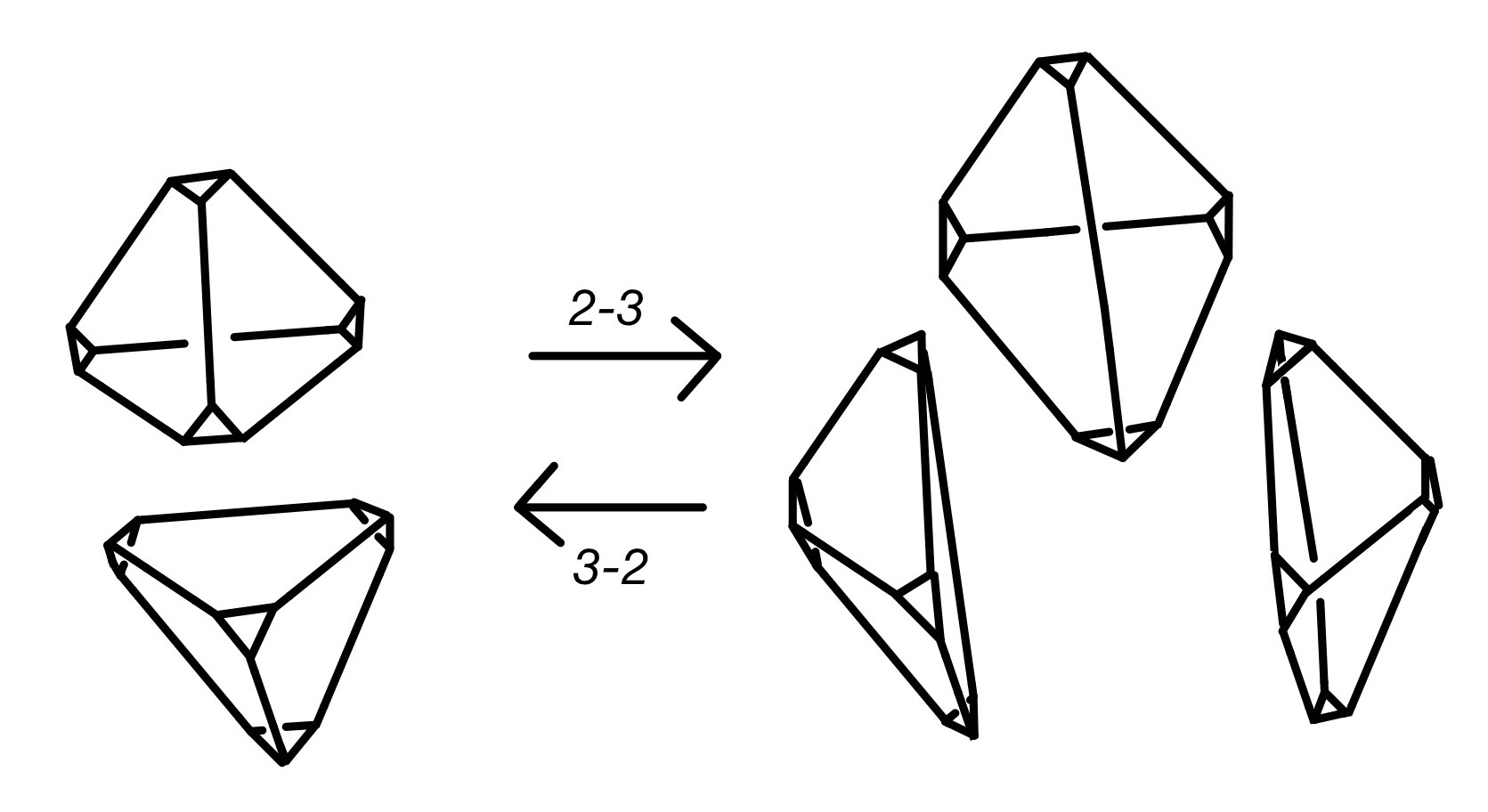}
         \caption{}
         \label{fig:ideal 2-3 move}
     \end{subfigure}
     \hspace{20px}
     \begin{subfigure}[b]{0.5\textwidth}
         \centering
         \includegraphics[width=\textwidth]{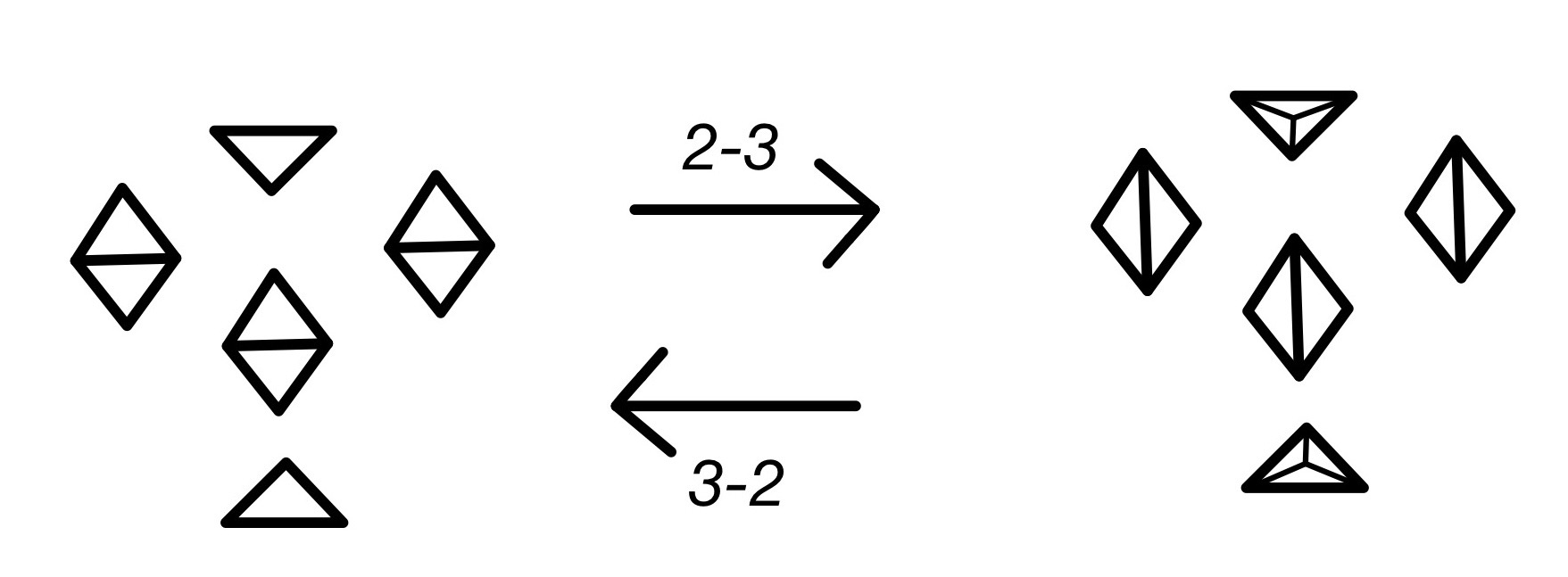}
         \caption{}
         \label{fig:2-3 cusp view}
     \end{subfigure}
     \hfill
        \caption{\textit{(a)}: The 2-3 move applied to ideal tetrahedra. \textit{(b)}: The effect of the 2-3 move on the cusp triangulation.}
        \label{fig:2-3 ideal cusp}
\end{figure}

\subsection{Once-Punctured Torus Bundles}
We now define the \textbf{monodromy ideal triangulation} of a once-punctured torus bundle, which is a particularly nice and natural way to triangulate the manifold. Lackenby shows that this triangulation is isomorphic to its canonical (Epstein-Penner) decomposition \cite{lackenby2001canonicaldecompositiononcepuncturedtorus}. Jaco, Rubinstein, Spreer, and Tillmann show that it's also isomorphic to a minimal triangulation \cite{Jaco_2019}.

For the construction of the monodromy ideal triangulation, we follow work of Floyd and Hatcher \cite{FLOYD1982263} and Gu\'eritaud \cite{Gu_ritaud_2006}. Let $\varphi \in \SL_2(\ZZ)$ be a matrix with two distinct real eigenvalues. Then $\varphi$ determines a homeomorphism of the torus fixing the ``origin" $0\in T^2$, so that we may define the 3-manifold
$$ M_\varphi = ((T^2 -\{0\})\times I )/\sim $$
where $(x,0) \sim (\varphi(x), 1)$. The map $\varphi$ is the monodromy associated to $M_\varphi$. By Thurston's hyperbolization theorem \cite{thurston1979geometry}, $M_\varphi$ admits a finite-volume hyperbolic structure.

Let
\[ L = \begin{pmatrix}
    1&0\\1&1
\end{pmatrix}\;\;\;\;\;\;\;\;\;\;\text{ and } \;\;\;\;\;\;\;\;R = \begin{pmatrix}
    1&1\\0&1
\end{pmatrix}. \]
Then the conjugacy class of $\varphi$ contains an element of the form
\[ V\varphi V^{-1} = L^{a_1}R^{b_1}L^{a_2}R^{b_2}\cdots L^{a_n}R^{b_n}, \]
where $V \in \text{SL}_2(\mathbb Z)$, $n > 0$, and $a_i$ and $b_i$ are positive integers. Moreover, the right-hand side is unique up to cyclic permutation of the factors (this fact is Proposition 2.1 in \cite{Gu_ritaud_2006}). We call $L^{a_1}R^{b_1}L^{a_2}R^{b_2}\cdots L^{a_n}R^{b_n}$ the \textbf{cyclic word} associated to $\varphi$. We will often conflate $\varphi$ with its cyclic word, letting $\varphi = L^{a_1}R^{b_1}L^{a_2}R^{b_2}\cdots L^{a_n}R^{b_n}$.

We now triangulate $M_\varphi$ using the cyclic word associated to $\varphi$. The common approach in the literature is to make a rather nice connection between triangulations of tori and the Farey triangulation of $\HH^2$; see \cite{lackenby2001canonicaldecompositiononcepuncturedtorus} and \cite{Gu_ritaud_2006}. We instead take a more tangible approach for the sake of making examples.

This approach is by \textit{layering}: gluing tetrahedra strictly along the $I$ direction, realizing the monodromy across the gluings. Define the \textbf{size} of the cyclic word to be $|\varphi | = \sum_{i=1}^n a_i + b_i$. We begin with $|\varphi|$ tetrahedra which we wish to stack, thinking of each as a pleated surface representing two triangulations of a punctured torus. Let $A$ and $B$ be two adjacent tetrahedra. Performing an `L' from $A$ to $B$ realizes the identifications $A(012)=B(312)$ and $A(023)=B(013)$, see Figure \ref{fig:L Transvection}. If instead we perform an `R', we get the identifications $A(012) = B(013)$ and $A(023) = B(123)$, see Figure \ref{fig:R Transvection}. Layering in this way realizes the monodromy. 

\begin{figure}
     \centering
     \begin{subfigure}[b]{0.6\textwidth}
         \centering
         \includegraphics[width=\textwidth]{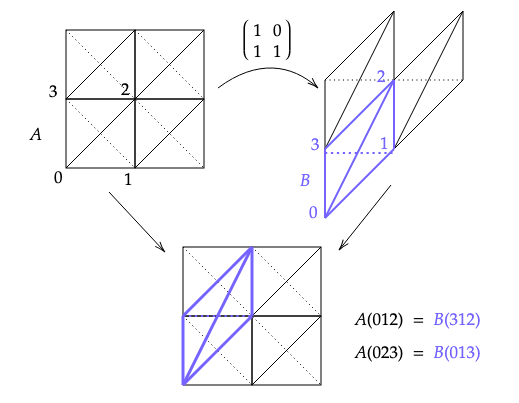}
         \caption{The layering of an $L$: taking $A$ and $B$ as pleated tori, we can view each in the universal cover $\mathbb R^2$ of the tori. Left-shearing the plane corresponding to $B$ and stacking it on the plane corresponding to $A$ produces the combinatorics shown.
         }
         \label{fig:L Transvection}
     \end{subfigure}
     \begin{subfigure}[b]{0.65\textwidth}
         \centering
         \includegraphics[scale=0.55]{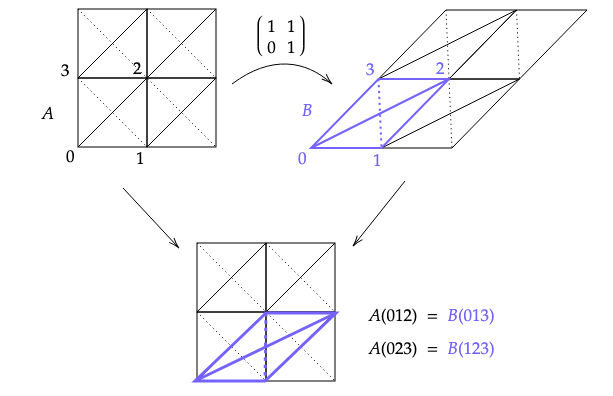}
         \caption{The layering of an $R$: taking $A$ and $B$ as pleated tori, we can view each in the universal cover $\mathbb R^2$ of the tori. Right-shearing the plane corresponding to $B$ and stacking it on the plane corresponding to $A$ produces the combinatorics shown.}
         \label{fig:R Transvection}
     \end{subfigure}
     \hfill
        \caption{}
\end{figure}

\subsection{Example: $L^4R^6$}
As an example, we will triangulate the once-punctured torus bundle associated to the word $\varphi = L^4R^6$. This will also serve as an example of the main theorem in the next section. Because $|\varphi|=10$, we begin with ten `flattened' tetrahedra. Gluing along the identifications of the previous section, we get \mbox{Figure~\ref{fig:L4R6 identification diagram}}. The resulting combinatorics is given in Table 1. We give the cusp picture in Figure~\ref{fig:L4R6 cusp}.
\newpage
\begin{figure}
    \centering
    \captionsetup{width=.8\linewidth}
\includegraphics[width=0.4\linewidth]{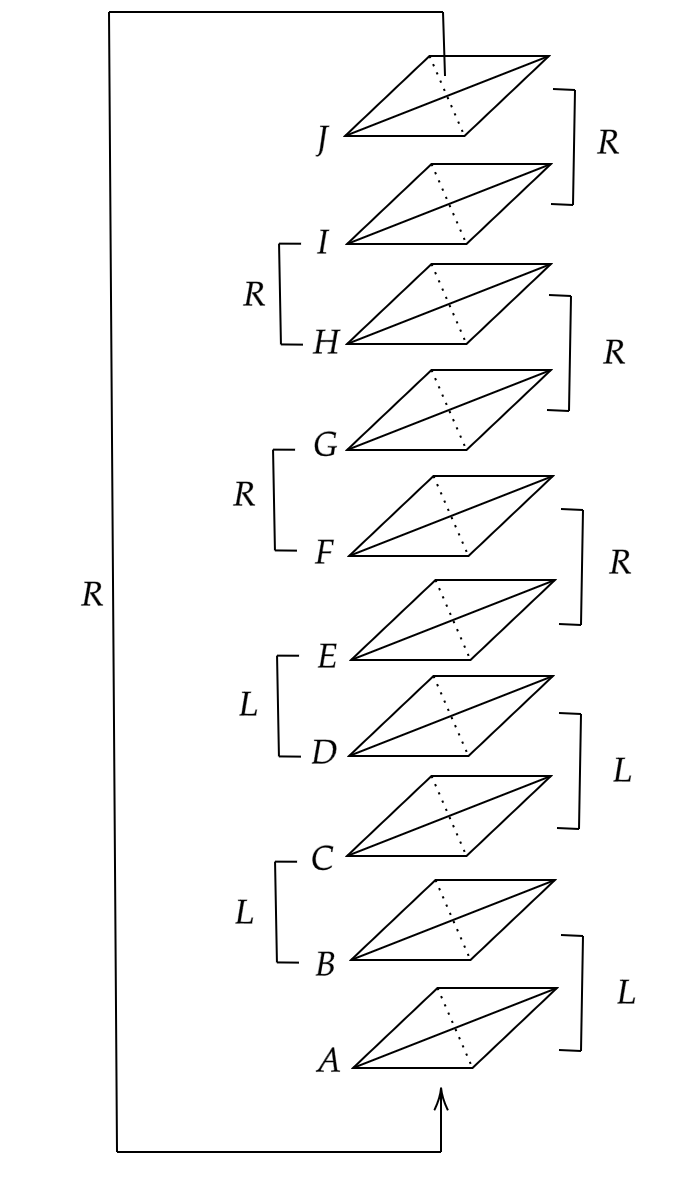}
    \caption{The monodromy ideal triangulation of the once-punctured torus bundle associated to the cyclic word $\varphi = L^4R^6$.}
    \label{fig:L4R6 identification diagram}

\end{figure}
\begin{figure}
    \centering
\captionsetup{width=.8\linewidth}
\includegraphics[width=1\linewidth]{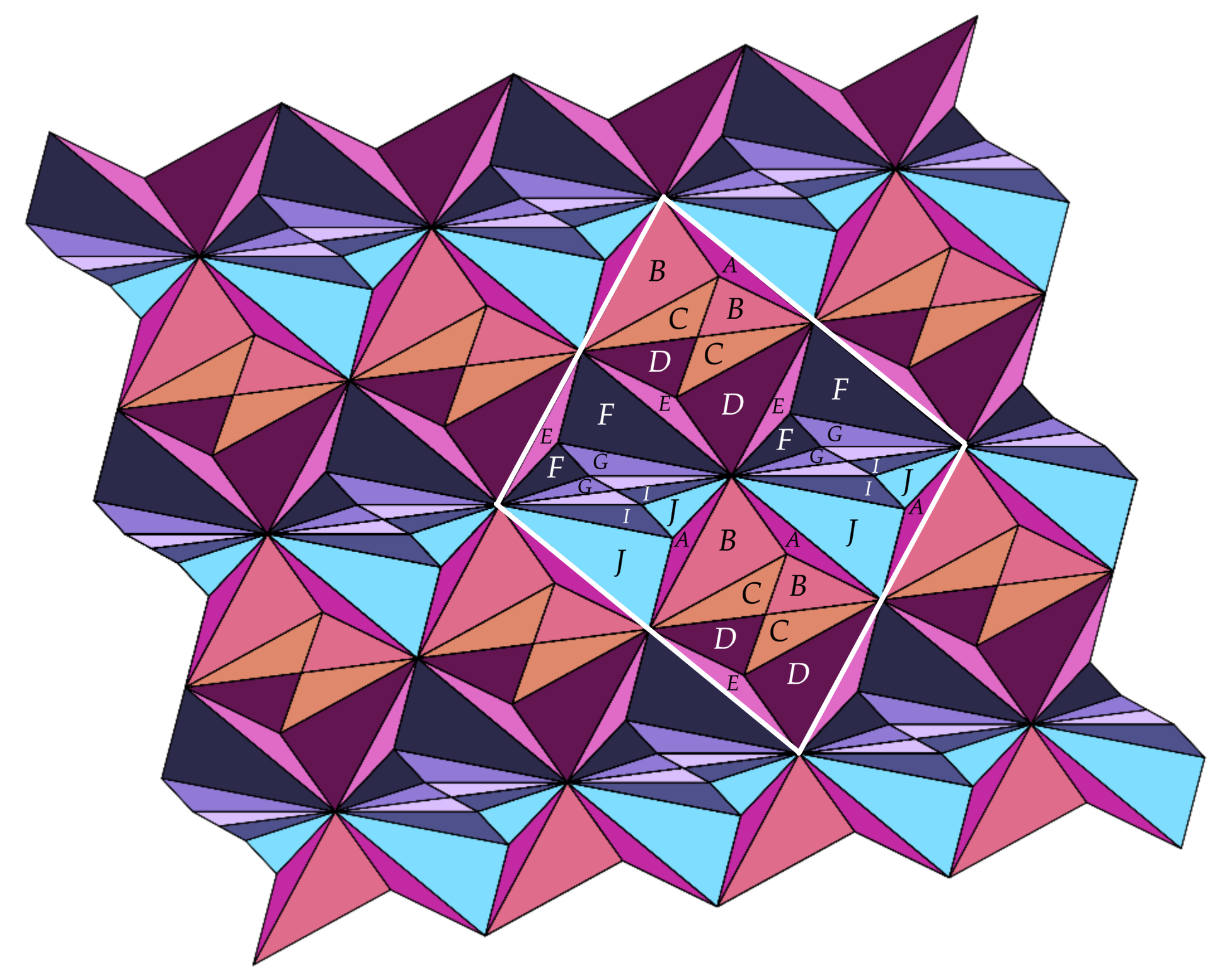}
    \caption{The cusp triangulation of the once-punctured torus bundle associated with $\varphi = L^4R^6$. Each color represents one tetrahedron. A fundamental domain is bordered in white.}
    \label{fig:L4R6 cusp}
\end{figure}
\newpage
\begin{table}[H]
\begin{center}
    \captionsetup{width=.8\linewidth}

\begin{tabular}{|l|l|l|l|l|}
\hline
Tetrahedron & Face $012$ & Face $013$ & Face $023$ & Face $123$ \\ \hline
$A$         & $B(312)$   & $J(012)$   & $B(013)$   & $J(023)$   \\ \hline
$B$         & $C(312)$   & $A(023)$   & $C(013)$   & $A(120)$   \\ \hline
$C$         & $D(312)$   & $B(023)$   & $D(013)$   & $B(120)$   \\ \hline
$D$         & $E(312)$   & $C(023)$   & $E(013)$   & $C(120)$   \\ \hline
$E$         & $F(013)$   & $D(023)$   & $F(123)$   & $D(120)$   \\ \hline
$F$         & $G(013)$   & $E(012)$   & $G(123)$   & $E(023)$   \\ \hline
$G$         & $H(013)$   & $F(012)$   & $H(123)$   & $F(023)$   \\ \hline
$H$         & $I(013)$   & $G(012)$   & $I(123)$   & $G(023)$   \\ \hline
$I$         & $J(013)$   & $H(012)$   & $J(123)$   & $H(023)$   \\ \hline
$J$         & $A(013)$   & $I(012)$   & $A(123)$   & $I(023)$   \\ \hline
\end{tabular}
   \caption{Gluing information for the monodromy ideal triangulation of the once-punctured torus bundle associated to the cyclic word $\varphi=L^4R^6$.}
   \end{center}
    \label{table:L4R6 identifications}
\end{table}


\subsection{Anatomy of the Cusp}
Our proof of the main theorem takes advantage of understanding the triangulation of the cusp in order to understand the triangulation of the manifold. In the case of monodromy ideal triangulations, the layering structure lends itself to an especially nice cusp triangulation. Let $T$ be the monodromy ideal triangulation of $M_\varphi$.

Due to the layering of the tetrahedra, there is a natural labeling of the faces of $T$ (each corresponding to three edges of the cusp triangulation) by `$L$' and `$R$'. If the label just before and just after a cusp triangle are different, then we call the associated tetrahedron a \textbf{toggle}\footnote{Some authors use the term hinge.}. The tetrahedra between two toggles collectively form a \textbf{fan}. In Figure \ref{fig:L4R6 cusp}, tetrahedra $A$ and $E$ are toggles separating the fans $\{B,C,D\}$ and $\{F,G,H,I,J\}$.

We provide some properties of the monodromy ideal triangulation that we will use later. These follow from remarks given in \cite[Section~4]{Gu_ritaud_2006}.
\begin{proposition}
\label{prop: cusp properties}
    Let $M_\varphi$ be a once-punctured torus bundle.
    \begin{enumerate}[(a)]
        \item The fundamental domain of the cusp of the monodromy ideal triangulation contains four triangles per tetrahedron. However, there is a hyperelliptic involution (rotation by $\pi$ around the puncture), which is respected by the cusp. In the case of $\varphi = L^M R^N$, this means that the fundamental domain of the cusp consists of two pairs of isometric fans.
        \item The valence of each vertex in the cusp -- equivalently the degree of each edge in the triangulation of $M_\varphi$ -- is even.
    \end{enumerate}
\end{proposition}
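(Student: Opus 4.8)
My plan is to read both statements off the explicit layered combinatorics of the monodromy triangulation, using the Farey/diagonal-exchange model of Floyd--Hatcher and Gu\'eritaud \cite{FLOYD1982263,Gu_ritaud_2006} together with the elliptic involution of the fiber. For part (a), the triangle count is immediate: truncating each ideal tetrahedron at its four ideal vertices produces four vertex-link triangles, and since $M_\varphi$ has a single cusp all $4|\varphi|$ of these triangles assemble into the one cusp torus, which is therefore its own fundamental domain containing four triangles per tetrahedron. To produce the involution, I would use the linear map $\iota\colon x \mapsto -x$ on $T^2$, which fixes the puncture $0$ and commutes with every $\varphi \in \SL_2(\ZZ)$ since $\varphi(-x) = -\varphi(x)$; hence $(x,t)\mapsto(-x,t)$ is well defined on $M_\varphi$ (it respects the gluing $(x,0)\sim(\varphi x,1)$) and is an isometry preserving the cusp. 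It acts on the cusp triangulation as a rotation by $\pi$.

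For the fan statement in (a), I would recall from \cite[Section~4]{Gu_ritaud_2006} that the labels $L,R$ along the cyclic word partition the tetrahedra into toggles (where the label changes) and fans (maximal blocks of constant label), and that each fan appears in the cusp as a fan-shaped run of triangles. For $\varphi = L^MR^N$ the cyclic word has exactly two toggles and two fans, and the involution $\iota$ carries this configuration to a second, isometric copy inside the fundamental domain; reading off Figure~\ref{fig:L4R6 cusp}, the $4|\varphi|$ triangles then organize into two pairs of isometric fans, one pair for the $L$-block and one for the $R$-block.

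For part (b), I first note that each cusp vertex is an end of an edge of $M_\varphi$ and that its valence equals the number of tetrahedra meeting that edge, i.e. the edge degree; so it suffices to show every edge degree is even. Passing to the infinite cyclic cover, the bi-infinite word gives a bi-infinite path of Farey triangles $\ldots,T_{i},T_{i+1},\ldots$, where $T_i\to T_{i+1}$ is the flip realized by one tetrahedron $\Delta_i$. Each edge of $M_\varphi$ is a slope $\sigma$, and the triangles of the path having $\sigma$ as a vertex form a contiguous block $T_a,\ldots,T_b$. Tracking how $\sigma$ sits in each tetrahedron, $\sigma$ is a diagonal exactly in the two end tetrahedra $\Delta_{a-1}$ and $\Delta_b$ that create and destroy it, contributing $1$ each, while in each of the $b-a$ interior tetrahedra $\Delta_a,\ldots,\Delta_{b-1}$ it is one of the two persisting (horizontal) edges and so is traversed twice, contributing $2$ each. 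Hence $\deg(\sigma) = 2 + 2(b-a) = 2(b-a+1)$, which is even, and the same count survives the quotient by the monodromy.

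The main obstacle I anticipate is bookkeeping rather than conceptual: in part (b) one must verify carefully that a slope contributes $2$ from each interior tetrahedron and $1$ from each of the two boundary tetrahedra --- in particular handling the degenerate case $a=b$ of a slope created and immediately flipped, where $\deg(\sigma)=2$ --- and in part (a) one must make precise how $\iota$ permutes the fans and toggles so that the two fans genuinely double into two \emph{isometric} pairs. Both points are settled by the explicit cusp description in \cite[Section~4]{Gu_ritaud_2006}, which I would cite for the fan picture and use to confirm the combinatorial contributions in the degree count.
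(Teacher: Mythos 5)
Your proposal is correct, but it is worth noting that the paper does not actually prove this proposition at all: it simply asserts that both parts ``follow from remarks given in \cite[Section~4]{Gu_ritaud_2006}.'' So what you have done is reconstruct, from scratch, the content that the paper outsources to Gu\'eritaud. Your reconstruction is sound on both counts. For (a), the count of four link triangles per tetrahedron in the single cusp is immediate as you say, and the involution $\iota(x)=-x$ is exactly the right map: it is linear, so it commutes with every $\varphi\in\SL_2(\ZZ)$, preserves every slope through the origin (hence every pleated fiber triangulation and every layered tetrahedron), and descends to a rotation by $\pi$ on the cusp, which is why the $L$-fan and $R$-fan each appear as an isometric pair in the fundamental domain. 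For (b), your bookkeeping in the layered model is correct: in each tetrahedron the old and new diagonals each occur once and the two persisting slopes each occur twice (as opposite equatorial edge pairs), so an edge collects $1+1$ from the tetrahedra that create and destroy it and $2$ from each intermediate one, giving even degree $2(b-a+1)$.

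Two minor refinements would tighten the argument. First, the degenerate case $a=b$ you flag cannot actually occur: a slope created by one flip and destroyed by the next would mean the Farey path backtracks ($T_{a+1}=T_{a-1}$), which is impossible for a positive word in $L$ and $R$; still, since $\deg(\sigma)=2$ is even there too, parity is unaffected either way. Second, ``the same count survives the quotient'' deserves one sentence of justification: since $\varphi$ is Anosov (two distinct real eigenvalues, irrational eigendirections), no power of $\varphi$ fixes any slope, so each edge of $M_\varphi$ has trivial stabilizer in the deck group of the infinite cyclic cover and its degree equals the count for any representative slope; the same hypothesis guarantees the word contains both letters, so every slope is eventually flipped out in both directions and the block $T_a,\dots,T_b$ is finite. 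With those two observations in place, your argument is a complete and self-contained proof of a statement the paper leaves as a citation.
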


\section{Proof of the Main Theorem}
Let $\varphi = R^NL^M$, and let $M_\varphi$ be the associated once-punctured torus bundle.
We take the following proposition from \cite{Gu_ritaud_2006}:
\begin{proposition}\cite[Proposition 10.1]{Gu_ritaud_2006}
\label{proposition: gueritaud-10.1}
    There are constants $a, b, a',$ and $b'$ dependent on $N$ and $M$ for which the fan corresponding to $R^N$ can be embedded into $\CC$ with nodes at complex coordinates $\pm\cot(b)$ and intermediary vertices $\cot(a+sb)$ where $-1 \leq s \leq N+1$; similarly, the fan corresponding to $L^M$ can be embedded into $\CC$ (possibly with a different scaling factor) with nodes $\pm\cot(b')$ and intermediary vertices $\cot(a' + sb')$ where $-1 \leq s \leq M+1$. See Figure \ref{fig:fan calculations}.
\end{proposition}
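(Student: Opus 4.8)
The plan is to import the complete hyperbolic structure on $M_\varphi$ (it exists by \cite{Gu_ritaud_2006}, via Thurston's hyperbolization together with Guéritaud's verification that the monodromy triangulation is geometric) and then read the cusp off the developing map. Once the structure is complete, the cusp torus carries a genuine Euclidean similarity structure, so a fundamental domain of its universal cover develops into $\CC$ as an honest triangulated region. By the hyperelliptic symmetry and the fan decomposition of Proposition \ref{prop: cusp properties}, it suffices to analyze the developed image of a single fan, say the one associated to $R^N$; the $L^M$ fan is identical after swapping the roles of $N$ and $M$ and relabeling, which is what produces $a',b'$ from $a,b$.

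First I would make the within-fan combinatorics explicit. Inside a fan every tetrahedron is of the same type ($R$), so the gluing pattern carrying the cusp triangle of one tetrahedron to the next is the \emph{same at every step}. Developing the fan triangle-by-triangle therefore amounts to applying one fixed Möbius transformation $f$ repeatedly, where $f$ is determined by this common local picture together with the shape parameters. Consequently the outer vertices of the fan form a single orbit $v_{-1}, v_0, \dots, v_{N+1}$ of $f$, and the two nodes are the two fixed points of $f$.

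The crucial point is that $f$ is \emph{elliptic}. Because the structure is geometric, each tetrahedron is positively oriented and each cusp triangle is nondegenerate with all angles in $(0,\pi)$; this forces the step map to have complex-conjugate eigenvalues of modulus one rather than real eigenvalues, i.e. $f$ is conjugate to a rotation by an angle $b$. For an elliptic Möbius map, written in a coordinate sending its two fixed points to $\pm i$, the orbit of a point is exactly $\cot(a+sb)$: if $(x_s,y_s)$ evolves by the rotation matrix $\left(\begin{smallmatrix} \cos b & -\sin b \\ \sin b & \cos b \end{smallmatrix}\right)$ starting from $(\cos a, \sin a)$, then $x_s=\cos(a+sb)$, $y_s=\sin(a+sb)$, so $v_s = x_s/y_s = \cot(a+sb)$. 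Choosing the normalization (permitted by the ``different scaling factor'' clause) that places the fixed points at $\pm\cot(b)$ then gives the stated nodes, and the intermediary vertices are precisely the orbit points $\cot(a+sb)$ for $-1 \le s \le N+1$, as in Figure \ref{fig:fan calculations}.

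The main obstacle, and the final step, is pinning down $a,b$ (and $a',b'$) as the specific constants depending on $N$ and $M$ for which this local picture is globally consistent. Here the global data enters: the holonomy once around the cusp must be a pure Euclidean translation, with no scaling or rotation, and this is exactly the condition that the accumulated rotation over the $N$ steps of one fan and the $M$ steps of the other, glued along their shared toggles, returns to the identity. Solving this closing-up condition determines $a,b,a',b'$ and simultaneously certifies that every angle stays in the positively-oriented range. Establishing existence and uniqueness of this solution is the genuinely hard input — it is Guéritaud's angle-structure and volume-maximization argument — so in practice I would cite \cite{Gu_ritaud_2006} for that existence statement and reserve the direct work for the elliptic-orbit computation above, which is what converts the known shape parameters into the explicit cotangent coordinates.
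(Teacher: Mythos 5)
The paper never proves this proposition at all: it is imported verbatim from Gu\'eritaud, and the citation \cite[Proposition 10.1]{Gu_ritaud_2006} \emph{is} the paper's entire justification. Your closing paragraph, which defers the ``genuinely hard input'' to Gu\'eritaud's angle-structure and volume-maximization argument, is therefore aligned with what the paper actually does. The trouble is the bridge you build in between. Your decisive step --- ``inside a fan the gluing pattern is the same at every step, so developing the fan amounts to applying one fixed M\"obius transformation $f$ repeatedly'' --- does not follow. The combinatorial gluing pattern repeats, but the shape parameters do \emph{not}: the tetrahedra in a fan have different shapes (this is visible in the conclusion itself, since the spacing of the points $\cot(a+sb)$ varies with $s$, so consecutive cusp triangles are not related by a common similarity). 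Each step of the development is a M\"obius/similarity map depending on the shape of the tetrahedron being crossed, so a priori you get a composition of $N$ \emph{different} maps. The claim that the central vertices nevertheless form a single orbit of one fixed elliptic map is equivalent to the cotangent conclusion you are trying to prove: it encodes exactly the special trigonometric form of the solution to the gluing and completeness equations for the word $R^N L^M$, which Gu\'eritaud extracts by exploiting the symmetries of this word and solving the equations explicitly. At the decisive point your argument is circular.

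Two secondary problems. First, even granting a fixed step map $f$, ellipticity does not follow from positivity of the triangles: only finitely many steps occur ($-1 \leq s \leq N+1$), and a loxodromic $f$ can just as well produce finitely many nondegenerate, positively oriented triangles, so ``all angles in $(0,\pi)$'' forces nothing about the eigenvalues of $f$. Second, your normalization is internally inconsistent: the map $w \mapsto (w\cos b - \sin b)/(w\sin b + \cos b)$, whose orbit is $\cot(a+sb)$, has fixed points $\pm i$, not $\pm\cot(b)$; any similarity moving the fixed points to the real points $\pm\cot(b)$ rotates and rescales the orbit off the real cotangent values, so you cannot simultaneously realize the nodes as fixed points at $\pm\cot(b)$ and the intermediary vertices at $\cot(a+sb)$ in one coordinate. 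In the actual picture (Figure \ref{fig:fan calculations}) the nodes are ordinary vertices of the strip --- the ends of the fan, matched up under the rotation by $\pi$ of Proposition \ref{prop: cusp properties} --- not fixed points of any step map. A repaired write-up should either cite Gu\'eritaud for the whole statement, as the paper does, or reproduce his explicit solution of the gluing equations; the elliptic-orbit computation is a pleasant way to see why cotangents parametrize orbits of elliptic M\"obius maps, but it cannot substitute for that computation.
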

\begin{figure}[H]
    \centering
    \captionsetup{width=.8\linewidth}
    \includegraphics[width=0.9\linewidth]{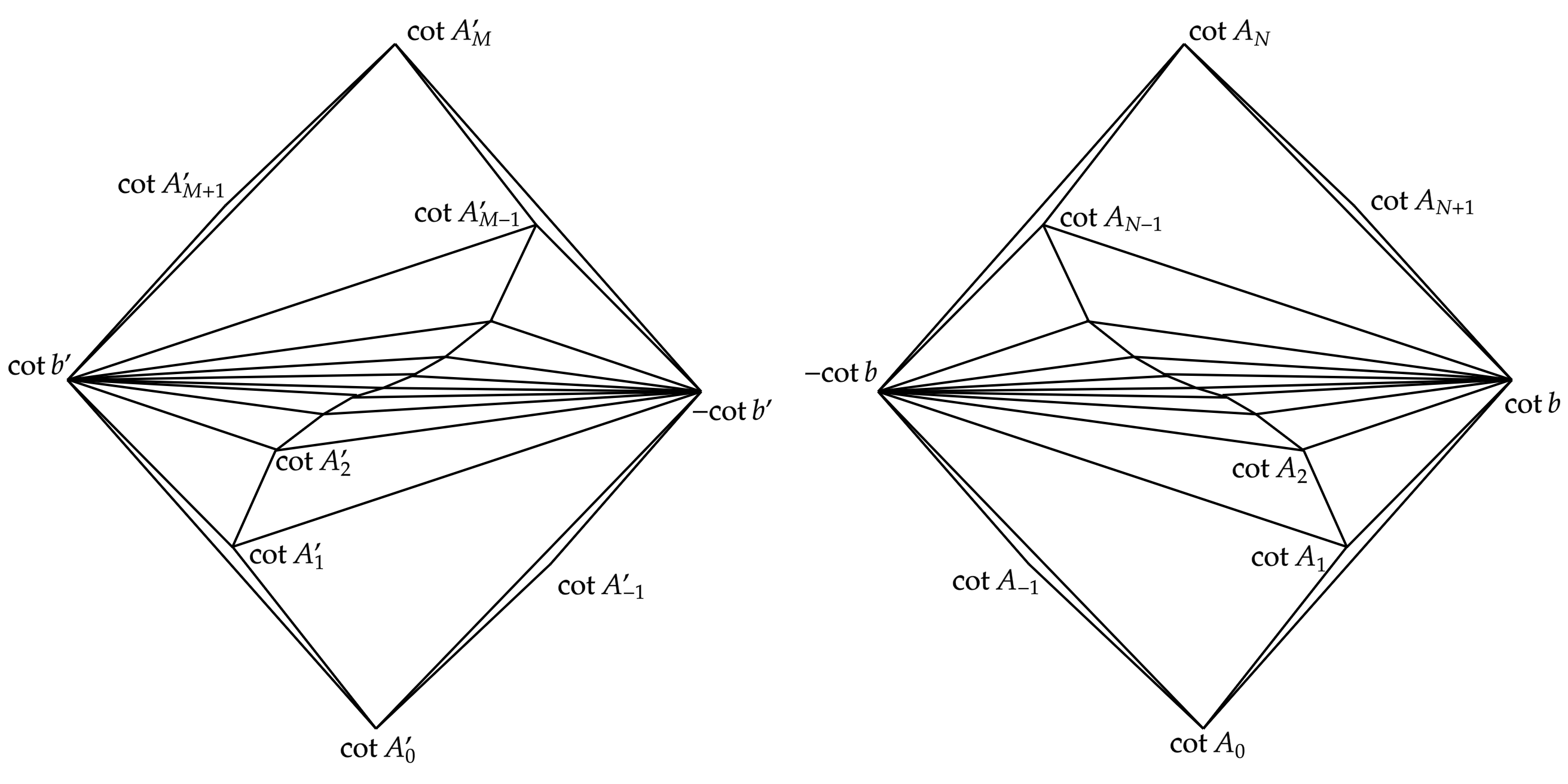} 
    \caption{The embeddings of the two fans of the cusp triangulation of the once-punctured torus bundle associated to $\varphi = R^N L ^M$. Here, $A'_s = a' + sb'$ and $A_s = a + sb$. Notice the central vertices lie on an embedding of the graph of $\cot(x)$. Adopted from \cite[Figure~11]{Gu_ritaud_2006}.}
    \label{fig:fan calculations}
\end{figure}
We note that the fans pictured in Figure \ref{fig:fan calculations} are symmetric via a rotation by $\pi$.

The following theorem gives an infinite family of manifolds whose canonical decomposition is geometrically isolated.
\begin{theorem}
\label{theorem: main}
    The once punctured torus bundle associated to the cyclic word $R^{2N}L^{2M}$ has an isolated monodromy ideal triangulation for all $N,M > 0$.
\end{theorem}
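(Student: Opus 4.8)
The plan is to prove that no $2$-$3$ or $3$-$2$ move on the monodromy triangulation $T$ of $M_\varphi$, with $\varphi = R^{2N}L^{2M}$, yields a geometric triangulation. By the discussion following Proposition~\ref{prop: isolated not geom}, a $2$-$3$ move is geometric if and only if the three induced $2$-$2$ moves on the cusp triangulation are all geometric, meaning the newly introduced cusp edges are Euclidean geodesics lying inside the corresponding quadrilaterals (equivalently, each quadrilateral in the cusp is strictly convex across the diagonal being flipped). So my strategy is to reduce the entire problem to a combinatorial/Euclidean-geometry check on the cusp triangulation of Figure~\ref{fig:fan calculations}, using the explicit coordinates supplied by Proposition~\ref{proposition: gueritaud-10.1}. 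First I would enumerate, up to the symmetries of the cusp (the hyperelliptic involution and the fan-swapping symmetry from Proposition~\ref{prop: cusp properties}(a)), the finitely many combinatorial types of available moves: the available $3$-$2$ moves occur exactly at the degree-$3$ edges, and the available $2$-$3$ moves occur at each internal face. Proposition~\ref{prop: cusp properties}(b) tells me every edge has even degree, so there are in fact no degree-$3$ edges at all, which immediately eliminates every $3$-$2$ move and reduces the theorem to handling $2$-$3$ moves only.

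For the $2$-$3$ moves, the key step is to classify the faces of $T$ by their position relative to the fan/toggle structure. Each face is labeled $L$ or $R$, and a $2$-$3$ move across a face corresponds, on the cusp, to flipping the diagonals of the three quadrilaterals formed by the two cusp triangles meeting along each of the three edges of that face. I would partition faces into those lying within a fan and those incident to a toggle, and for each type write down the four relevant cusp vertices using the $\cot$-coordinates $\cot(a+sb)$ and $\pm\cot(b)$ (and the primed analogues). The heart of the argument is then a convexity computation: a $2$-$3$ move is geometric precisely when the relevant quadrilateral $(P_1,P_2,P_3,P_4)$ has the flipped diagonal interior to it, which I would detect by checking the sign of the appropriate cross-products / imaginary parts of cross-ratios of these four complex coordinates. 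The claim to establish is that for at least one of the three quadrilaterals associated to every face, the flip is obstructed: either the new diagonal exits the quadrilateral (the edge is not a geodesic, as in the left situation of Figure~\ref{fig:nongeometric 2-2 move}) or a triangle becomes negatively oriented.

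The main obstacle will be turning the $\cot$-coordinate data into a clean, uniform inequality that holds for all $N,M>0$. Because the vertices lie on a scaled copy of the graph of $\cot(x)$ (as noted after Proposition~\ref{proposition: gueritaud-10.1}), I expect the relevant convexity conditions to reduce to monotonicity and convexity/concavity properties of $\cot$ on the interval in question, together with the cotangent addition identities; the trigonometry tricks acknowledged in the introduction presumably package exactly this. The delicate cases are the faces adjacent to a toggle, where the two cusp triangles come from fans with different scaling factors, so the four vertices do not all lie on a single $\cot$-graph and the comparison must account for the two constants $b$ and $b'$ simultaneously. I would treat the toggle case last, showing that the node vertices $\pm\cot(b)$, $\pm\cot(b')$ force the flipped diagonal to fall outside its quadrilateral regardless of the scaling. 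Once every combinatorial type of face is shown to admit an obstructed $2$-$2$ move on the cusp, the geometric obstruction lifts to $M_\varphi$ and, combined with the absence of $3$-$2$ moves, establishes that $T$ is isolated, completing the proof.
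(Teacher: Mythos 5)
Your overall framework matches the paper's proof: $3$-$2$ moves are eliminated because every edge of $T$ has even degree (Proposition~\ref{prop: cusp properties}(b)), and $2$-$3$ moves are attacked by showing that every face of $T$ fails the induced $2$-$2$ convexity check on the cusp, using the $\cot$-coordinates of Proposition~\ref{proposition: gueritaud-10.1}. However, there is a genuine gap at the decisive step. You defer the entire content of the theorem to ``a clean, uniform inequality that holds for all $N,M>0$'' derived from convexity/concavity properties of $\cot$, but nowhere in your plan does the \emph{evenness} of the exponents $2N$ and $2M$ enter. No such uniform inequality can exist: as the paper observes after the proof, when an exponent is odd (e.g.\ $\varphi = R^{N}L^{2M+1}$) there \emph{is} a geometric $2$-$3$ move (Figure~\ref{fig:around inflection odd}), so an argument insensitive to parity proves too much and must break down somewhere. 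The missing idea is the inflection-point mechanism: four consecutive vertices on the embedded graph of $\cot(x)$ support the required pair of geometric $2$-$2$ moves if and only if the inflection point of the graph lies strictly between the middle two vertices (both relevant angles must be less than $\pi$; see Figure~\ref{fig:concave-up-tris}). Because the hyperelliptic involution makes each fan symmetric under rotation by $\pi$, an even exponent $2N$ yields an odd number $2N+1$ of relevant vertices, forcing one vertex to land \emph{exactly on} the inflection point; hence the strict-betweenness condition fails for every quadruple of consecutive vertices, in both fans. This parity argument is precisely where the hypothesis $R^{2N}L^{2M}$ is used, and without it your convexity computation cannot close.

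Two smaller points. First, you assert that a $2$-$3$ move is geometric \emph{if and only if} the three induced $2$-$2$ moves on the cusp are geometric; the paper only establishes and only needs the contrapositive of one direction (an obstructed $2$-$2$ move obstructs the $2$-$3$ move), and the converse is not justified --- you should weaken your claim accordingly, since it costs you nothing. Second, your planned separate treatment of faces adjacent to toggles, comparing two $\cot$-graphs with different scalings via the nodes $\pm\cot(b)$ and $\pm\cot(b')$, is unnecessary: Proposition~\ref{proposition: gueritaud-10.1} already places the relevant toggle vertices (the cases $s=-1$ and $s=N+1$) on the \emph{same} embedded $\cot$-graph as the fan's interior vertices, so a single concavity argument covers every face at once. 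Your instinct that this case is delicate is reasonable, but the paper's coordinates dissolve the difficulty rather than requiring a two-parameter comparison.
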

\begin{proof}
     Let $\varphi = R^{2N}L^{2M}$. Let $T$ be the monodromy ideal triangulation of $M_\varphi$. We want to show that there are no 2-3 or 3-2 moves which can be applied to $T$ and which result in a triangulation that is geometric.\\\\
    Firstly, there are no 3-2 moves, since $T$ has no degree-3 edges (Proposition \ref{prop: cusp properties}b).

    We now show that there are no geometric 2-3 moves by showing that any 2-3 move has a geometric obstruction on the cusp. Let $A$ and $B$ be two adjacent tetrahedra of $T$. Color the tetrahedra of $T$ so that the cusp triangulation consists of colored triangles (Figure \ref{fig:L4R6 cusp} is an example), and say $A$ has color $c_A$ and $B$ has color $c_B$. Recall that a geometric 2-3 move involving $A$ and $B$ induces a set of three disjoint geometric 2-2 moves involving triangles with colors $c_A$ and $c_B$ (see Figure \ref{fig:2-3 cusp view}). We show that no such set of 3 disjoint geometric 2-2 moves exists.
    \begin{figure}[H]
                \centering
                \captionsetup{width=.8\linewidth}
                \includegraphics[width=0.8\linewidth]{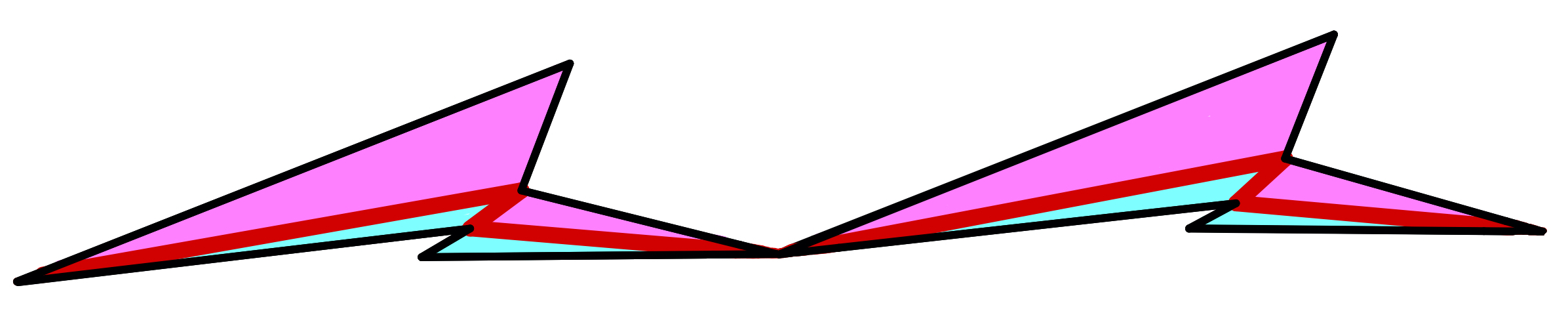}
                \caption{The cusp picture of two adjacent tetrahedra in the case of $\varphi = R^{2N}L^{2M}$. The red (thickened) edges show the locations for possible 2-2 moves between pink (upper layer) and blue (lower layer) triangles. The key is that no choice of three red edges induces three geometric 2-2 moves.}
                \label{fig:zoomed layer}
            \end{figure}
           
    First note that between two adjacent distinct tetrahedra, the 2-3 move is possible -- the obstruction is geometric; see Figure \ref{fig:zoomed layer}. Since the vertices of the cusp triangulation within a fan, including the relevant vertices of each toggle, lie on the graph of $\cot(x)$ by Proposition \ref{prop: cusp properties}, whether the move is geometric depends on the concavity of the graph at the vertices. In particular, we look at four consecutive vertices on the graph of $\cot(x)$. If the angle induced by the first three vertices and the angle induced by the last three vertices are both less than $\pi$, then a geometric 2-2 move is possible; see Figure \ref{fig:concave-up-tris}. It follows that the 2-2 moves are geometric if and only if the point of inflection of $\cot(x)$ lies between the middle two vertices; see Figure \ref{fig:even and odd inflection}. Since $2N + 1$ is odd, one vertex lies on the inflection point of the graph of $\cot(x)$ embedded into $\CC$, so no such geometric move is possible involving the fans associated with $R^{2N}$. Similarly, since $2M + 1$ is odd, no such geometric move is possible involving the fans associated with $L^{2M}$. Thus, $T$ has no available geometric 2-3 moves. \end{proof}   
            \begin{figure}[H]
                \centering
                \captionsetup{width=.8\linewidth}
                \includegraphics[width=0.5\linewidth]{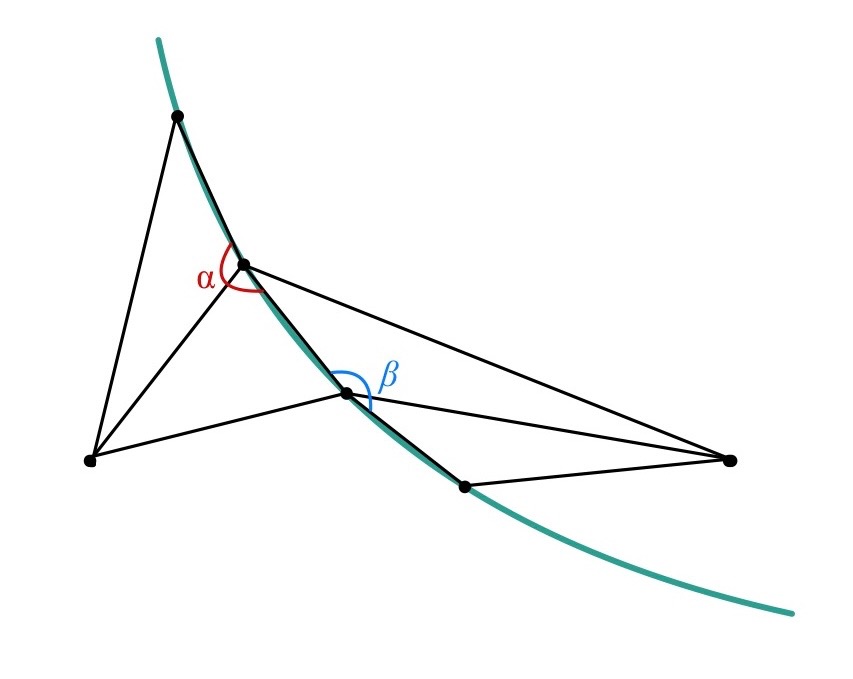}
                \caption{Four consecutive vertices making up two potential 2-2 moves. Note that the curve on which the central vertices lie is concave-up, so $\beta < \pi$ but $\alpha > \pi$, making the upper 2-2 move not geometric.}
                \label{fig:concave-up-tris}
            \end{figure}

\begin{figure}
     \centering
     \captionsetup{width=.8\linewidth}
     \begin{subfigure}[b]{0.45\textwidth}
         \centering
         \includegraphics[width=\textwidth]{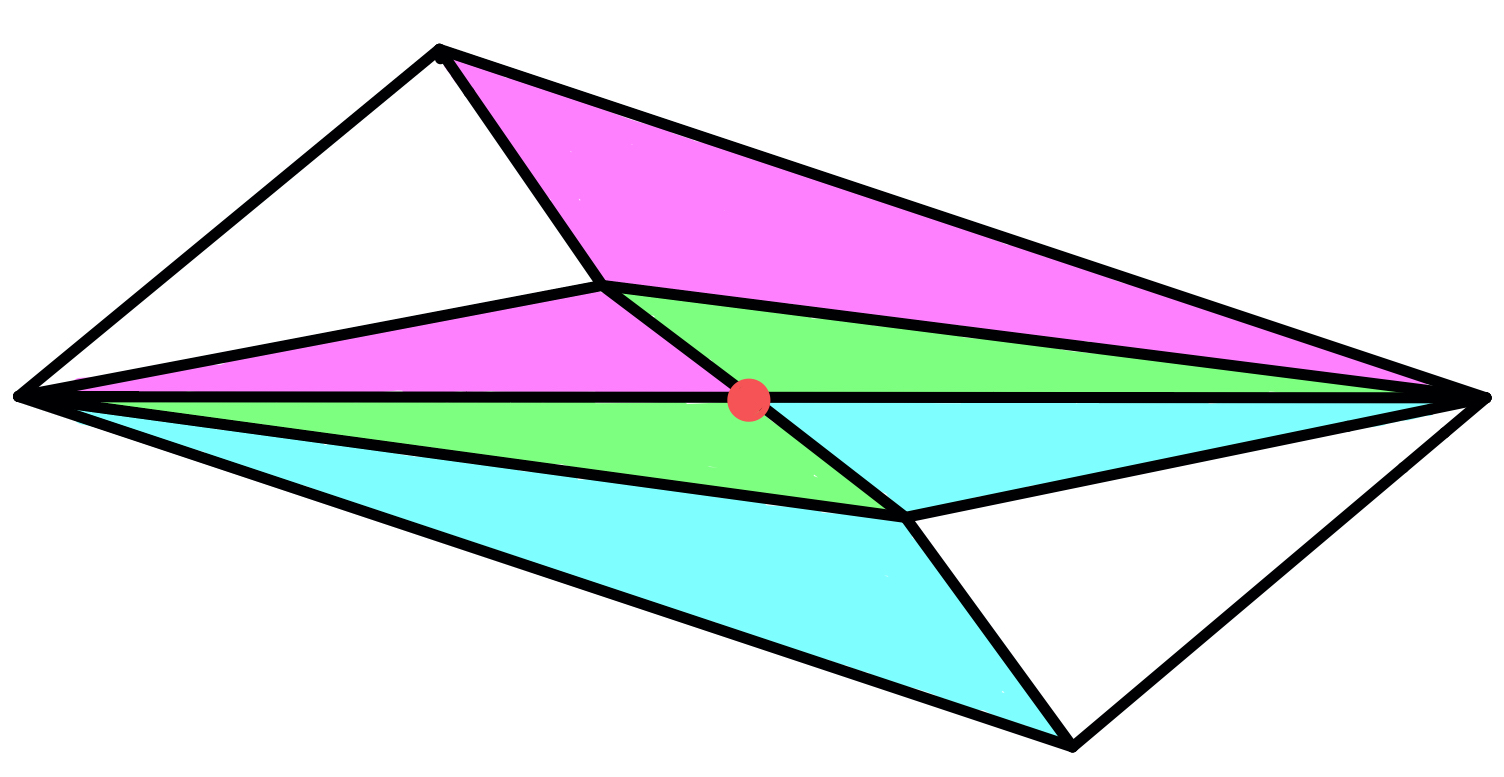}
         \caption{}
         \label{fig:around inflection even}
     \end{subfigure}
     \hspace{20px}
     \begin{subfigure}[b]{0.45\textwidth}
         \centering
         \includegraphics[width=\textwidth]{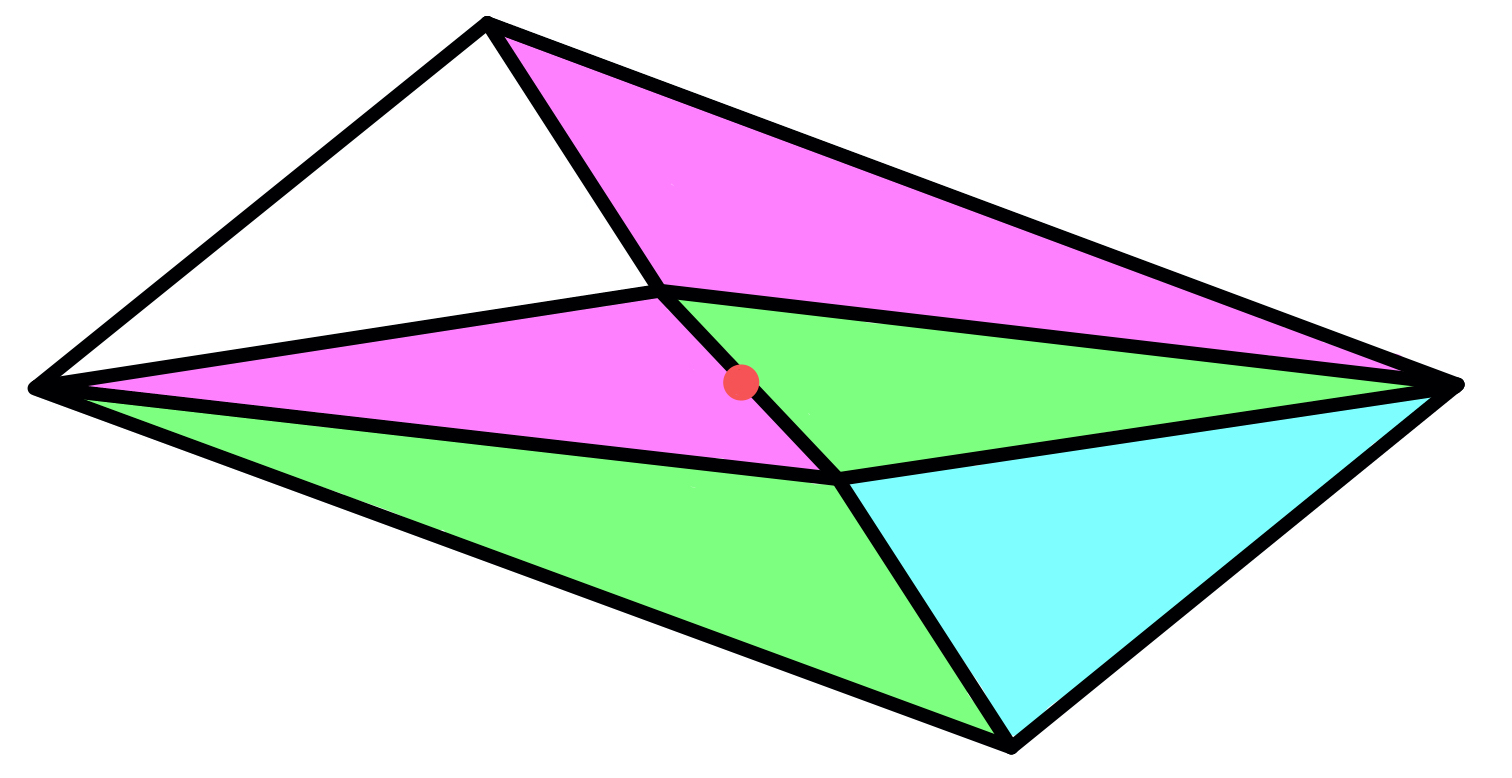}
         \caption{}
         \label{fig:around inflection odd}
     \end{subfigure}
     \hfill
        \caption{\textit{(a)}: The middle of a fan corresponding to $L^{2M}$. The red dot represents the inflection point of the embedding of the graph of $\cot(x)$. Between the pink (upper layer) and green (middle layer) triangles pictured, there is only one geometric 2-2 move.   \textit{(b)}: The middle of a fan corresponding to $L^{2M+1}$. Between the pink (upper layer) and green (middle layer) triangles, all 2-2 moves are geometric. Indeed, there is a geometric 2-3 move here when $\varphi = R^{N}L^{2M+1}$.}
        \label{fig:even and odd inflection}
\end{figure}
    
    Observe that if $\varphi = L^MR^N$, there is a geometric 2-3 move if either $M$ or $N$ is odd; this happens between the pink and green tetrahedra in Figure \ref{fig:around inflection odd}. In the case where $M$ and $N$ are both even, this 2-3 move induces a flat tetrahedron.

\section{Other Geometric Triangulations of Once-Punctured Torus Bundles}
Let $\varphi = L^{2M}R^{2N}$ with $M, N >1$, and let $T$ be the monodromy ideal triangulation of $M_\varphi$. By Theorem \ref{theorem: main}, there are no 2-3 or 3-2 moves that can be applied to $T$ to produce a different geometric triangulation. The goal of this section is to show that other geometric triangulations of $M_\varphi$ exist.

\begin{lemma}
\label{lemma: move sequence}
    Starting from the monodromy ideal triangulation $T$ of $M_\varphi$, there is a sequence of two 2-3 moves followed by a 3-2 move resulting in a geometric triangulation of $M_\varphi$ which is not isomorphic to $T$.
\end{lemma}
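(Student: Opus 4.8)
The plan is to exhibit an explicit sequence of local moves and verify geometricity at each stage by passing to the cusp triangulation, exactly as in the proof of Theorem \ref{theorem: main}. Since $M$ and $N$ are even, the obstruction of Theorem \ref{theorem: main} is precisely that a vertex of each fan sits at the inflection point of $\cot(x)$, so the starting $T$ admits no single geometric 2-3 move. The idea is therefore to first perform a 2-3 move that is \emph{not} geometric, passing through a triangulation with a flat tetrahedron (as anticipated in the remark closing the previous section), then a second 2-3 move, and finally a 3-2 move that collapses the flat tetrahedron and restores strict positivity. I would target the middle of one fan -- say the fan corresponding to $L^{2M}$, at the vertex lying on the inflection point -- since that is where the combinatorics is most symmetric and where the flatness originates.

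First I would fix the combinatorial data: name the two adjacent tetrahedra straddling the inflection vertex, and record the shape parameters in terms of Gu\'eritaud's angle data ($a', b'$ from Proposition \ref{proposition: gueritaud-10.1}), so that the flat tetrahedron produced by the first 2-3 move can be written down explicitly. Using the shape-tracking formulas $r = z'w'$, $u = wz''$, $v = zw''$ from the proof of Proposition \ref{prop: isolated not geom}, I would compute the shapes of the three new tetrahedra; the geometric obstruction at the inflection point forces exactly one of them to be flat (real shape parameter), while the other two remain in $\HH$. The second 2-3 move would be chosen to act on the flat tetrahedron together with an adjacent genuinely positive one, and I would check via the same formulas that this produces shapes that, after the subsequent 3-2 move at the appropriate degree-3 edge, combine to give strictly positive imaginary parts. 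The cleanest way to organize the verification is to track everything on the cusp: each 2-3 move is three geometric 2-2 moves on cusp triangles, so I would redraw the affected portion of Figure \ref{fig:even and odd inflection} after each move and confirm that the final configuration has all triangles positively oriented with geodesic edges.

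The main obstacle I anticipate is the middle step: after the first, deliberately non-geometric, 2-3 move we are outside $\mathbb T_G(M)$, so I cannot invoke any connectivity result and must argue by hand that the resulting real (flat) shape is a \emph{removable} degeneracy rather than a genuine failure of the gluing equations. Concretely, I must show the flat tetrahedron is of the ``doubly-covered triangle'' type that a 3-2 move can eliminate, which requires checking that the relevant edge does become degree $3$ with three distinct tetrahedra after the second 2-3 move, and that the limiting shapes satisfy the gluing and completeness equations strictly in the limit. Because the $\cot(x)$ picture is symmetric under rotation by $\pi$ (as noted after Proposition \ref{proposition: gueritaud-10.1}), I expect the flat tetrahedron to sit symmetrically about the inflection point, which should make the final 3-2 move split the inflection-point degeneracy into a pair of strictly positive tetrahedra.

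Finally, I would confirm the output triangulation is not isomorphic to $T$. The simplest invariant to use is the edge-degree sequence or the combinatorial type of the cusp triangulation: the move sequence changes the local fan structure near the former inflection vertex, producing a cusp triangulation that is no longer the uniform ``staircase on the graph of $\cot(x)$'' of the monodromy triangulation. I expect the resulting triangulation to have the same number of tetrahedra (two 2-3 moves add two, a 3-2 move removes one, for a net $+1$), so a count alone suffices to distinguish it from $T$, and I would note this as the cleanest non-isomorphism certificate; if needed, the isomorphism signature computed in Regina gives an unambiguous check.
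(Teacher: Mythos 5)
Your proposal is correct and follows essentially the same route as the paper: perform the flat-tetrahedron-producing 2-3 move at the inflection point of the fan (the paper uses $t_M$, $t_{M+1}$, $t_{M+2}$ in the $L^{2M}$ fan), verify each step via the concavity of the cusp triangulation along the graph of $\cot(x)$, remove the flat tetrahedron with the final 3-2 move, and distinguish $T'$ from $T$ by the tetrahedron count. Your only slip is the phrase ``the same number of tetrahedra,'' which your own parenthetical immediately corrects to a net gain of one ($|\varphi|+1$ versus $|\varphi|$) --- exactly the paper's non-isomorphism certificate.
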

\begin{proof}
    The layering of tetrahedra in $T$ induces a natural indexing of the tetrahedra in each fan; label each tetrahedron in the fan corresponding to $L^{2M}$ by $t_1,\dots,t_{2M-1}$.  The sequence of moves we wish to perform involves only tetrahedra $t_M$, $t_{M+1}$, and $t_{M+2}$. Recall from the proof of Theorem \ref{theorem: main} that on the cusp, there are vertex triangles of $t_M$, $t_{M+1}$, and $t_{M+2}$ which share a vertex with the inflection point of the graph of $\cot(x)$, as in Figure \ref{fig:around inflection even}. 

    Note that a 2-3 move induces three 2-2 moves and two 1-3 moves on the cusp, while a 3-2 move induces three 2-2 moves and two 3-1 moves (Figure \ref{fig:2-3 cusp view}). We illustrate the sequence of moves on the cusp in Figure \ref{fig:regeometrizing sequence on the cusp}; these correspond to 2-3 and 3-2 moves in the triangulations of $M_\varphi$.

    \begin{figure}
        \centering
        \includegraphics[width=0.8\linewidth]{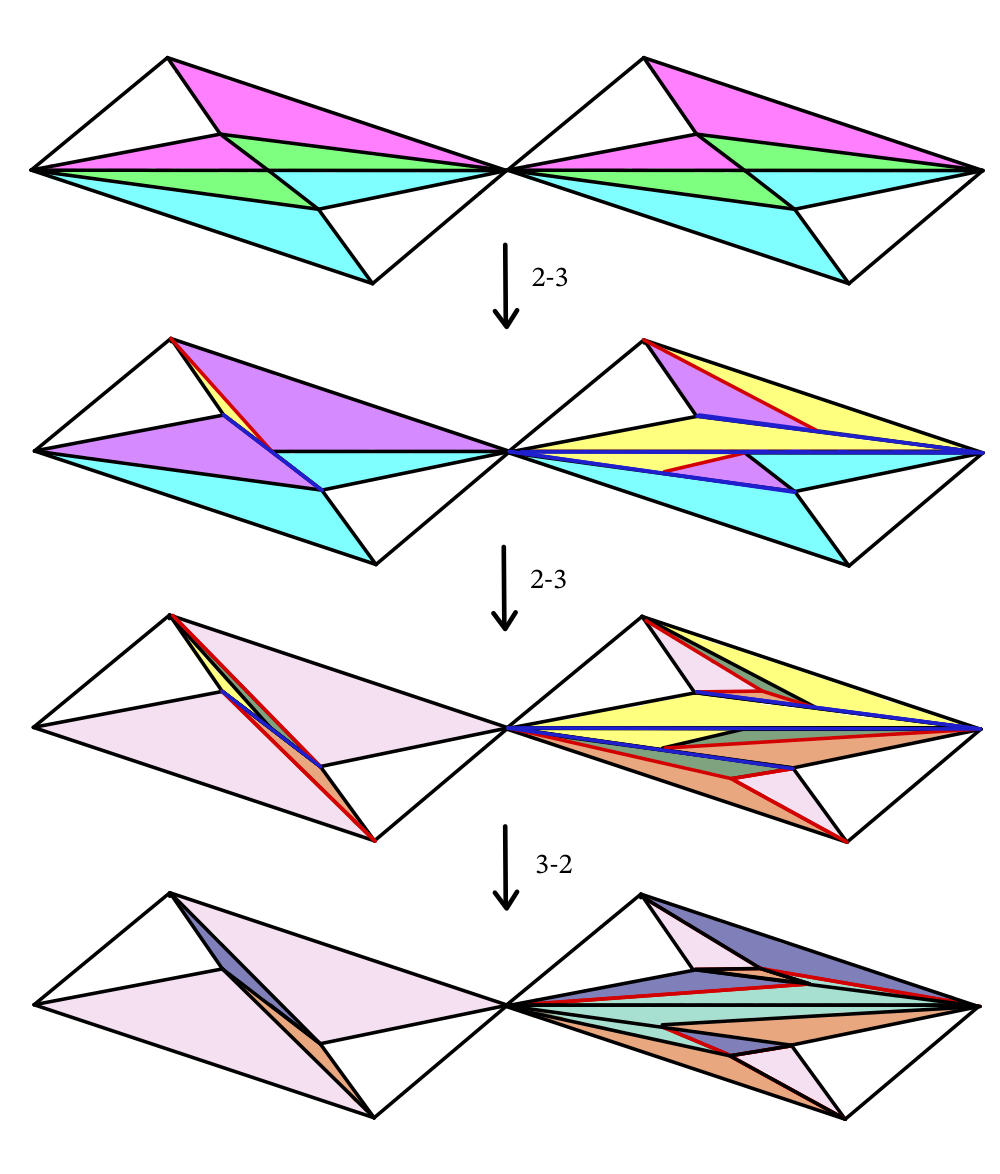}
         \captionsetup{width=.8\linewidth}
        \caption{The effect of the sequence of moves from Lemma \ref{lemma: move sequence} on the relevant portion of the cusp. Each color represents a tetrahedron. The first triangulation is the monodromy ideal triangulation $T$; shown are $t_M$ (pink, upper layer), $t_{M+1}$ (green, middle layer), and $t_{M+2}$ (blue, lower layer). The red edges indicate the edges (faces of the triangulation of $M_\varphi$) introduced in that move. The dark blue edges represent the flat tetrahedron.}
        \label{fig:regeometrizing sequence on the cusp}
    \end{figure}

    The first 2-3 move introduces a flat tetrahedron, shown by the dark blue edges in Figure \ref{fig:regeometrizing sequence on the cusp}. The final 3-2 move removes this flat tetrahedron, resulting in a geometric triangulation $T'$. The three moves illustrated in Figure \ref{fig:regeometrizing sequence on the cusp} depend only on the concavity of the vertices of the cusp triangulation of $T$, as in the proof of Theorem \ref{theorem: main}. Thus, these moves work for general $\varphi = L^{2M}R^{2N}$.

    The new geometric triangulation $T'$ has $|\varphi|+1$ tetrahedra, so $T'$ is not isomorphic to $T$.
\end{proof}

\begin{remark}
    The proof above performs these moves on the cusp corresponding to $L^{2M}$, but the same can be done for the cusp corresponding to $R^{2N}$. When $M\neq N$, this gives another geometric triangulation. Computer searches using the programs SnapPy \cite{SnapPy} and Regina \cite{regina} on small values of $N$ and $M$ suggest many more geometric triangulations exist.
\end{remark}

This shows that $M_\varphi$ has at least two geometric triangulations. As defined in the introduction, let the geometric bistellar flip graph $\mathbb{T}_G(M)$ be the graph whose nodes are the geometric ideal triangulations of $M$ and whose arcs connect nodes if and only if those triangulations differ by a 2-3 move. Together with Theorem $\ref{theorem: main}$, we get the following:
\begin{corollary}
\label{cor:infinitely many disconnected}
There are infinitely many cusped hyperbolic 3-manifolds $M$ for which the geometric bistellar flip graph is disconnected.
\end{corollary}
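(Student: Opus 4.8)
The plan is to read the corollary off of Theorem \ref{theorem: main} and Lemma \ref{lemma: move sequence}: the theorem exhibits, in each manifold of the family, a geometric triangulation that is an isolated vertex of $\mathbb{T}_G(M)$, and the lemma exhibits a second, distinct geometric triangulation, forcing the graph to be disconnected. The only genuine work beyond this bookkeeping is arranging for \emph{infinitely many} pairwise non-homeomorphic manifolds.

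First I would fix the family: for integers $M, N > 1$ put $\varphi = L^{2M}R^{2N}$, with $M_\varphi$ the associated once-punctured torus bundle and $T$ its monodromy ideal triangulation. Since $L^{2M}R^{2N}$ is a cyclic permutation of $R^{2N}L^{2M}$, the two words give conjugate monodromies and hence the same manifold, so Theorem \ref{theorem: main} applies and no 2-3 or 3-2 move carries $T$ to a geometric triangulation. Because the arcs of $\mathbb{T}_G(M_\varphi)$ join geometric triangulations differing by a 2-3 move, this says exactly that $T$ has no incident arcs; it is an isolated vertex, forming a connected component by itself.

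Next I would apply Lemma \ref{lemma: move sequence} to obtain a geometric triangulation $T'$ of $M_\varphi$ with $|\varphi| + 1$ tetrahedra. Having a different number of tetrahedra than $T$, it is non-isomorphic to $T$ and so is a second vertex of $\mathbb{T}_G(M_\varphi)$. Thus the graph has at least two vertices while $T$ is isolated, so it is disconnected.

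Finally, to produce infinitely many such manifolds I would extract a pairwise non-homeomorphic subfamily. The cleanest route uses the minimality of the monodromy triangulation due to Jaco, Rubinstein, Spreer, and Tillmann \cite{Jaco_2019}: the tetrahedron count $|\varphi| = 2M + 2N$ is then a topological invariant of $M_\varphi$. Taking, for instance, $\varphi_k = L^4 R^{2k}$ for $k = 2, 3, 4, \dots$ makes $|\varphi_k| = 2k + 4$ strictly increasing, so the $M_{\varphi_k}$ have distinct minimal triangulation sizes and are pairwise non-homeomorphic, each with disconnected geometric bistellar flip graph. I expect this last step --- ensuring the words yield genuinely distinct manifolds and not merely distinct combinatorics --- to be the one requiring care, and the minimality invariant (or, alternatively, distinctness of hyperbolic volumes) settles it.
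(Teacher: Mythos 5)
Your proposal is correct and follows essentially the same route as the paper: the corollary is read off directly from Theorem \ref{theorem: main} (the monodromy triangulation is an isolated vertex of $\mathbb{T}_G(M_\varphi)$) together with Lemma \ref{lemma: move sequence} (a second, non-isomorphic geometric triangulation exists). Your final step --- using the minimality of the monodromy triangulation from \cite{Jaco_2019} to certify that the family contains infinitely many pairwise non-homeomorphic manifolds --- is a detail the paper leaves implicit, and it is a correct and worthwhile addition.
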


\bibliographystyle{plain}
\bibliography{refs}
\end{document}